\numberwithin{equation}{section}
\newtheorem{Th}{Theorem}[section]
\newtheorem{Rem}[Th]{Remark}
\newtheorem{Ex}[Th]{Example}
\newtheorem{Lemma}[Th]{Lemma}
\newtheorem{Prop}[Th]{Proposition}
\newtheorem{Cor}[Th]{Corollary}
\renewcommand{\section}%
   {\setcounter{equation}{0}\@startsection {section}{1}{\z@}{-3.5ex plus -1ex
  minus -.2ex}{2.3ex plus .2ex}{\Large\bf}}
\def\Wig{\mathop{\rm Wig}\nolimits}
\def\ds{\displaystyle}
\def\R{\mathbb R}
\def\C{\mathbb C}
\def\N{\mathbb N}
\newcommand{\D}{\mathcal{D}}
\newcommand{\F}{\mathcal{F}}
\newcommand{\Lin}{\mathcal{L}}
\newcommand{\Sch}{\mathcal{S}}
\newcommand{\beqsn}{\arraycolsep1.5pt\begin{eqnarray*}}
\newcommand{\eeqsn}{\end{eqnarray*}\arraycolsep5pt}
\newcommand{\beqs}{\arraycolsep1.5pt\begin{eqnarray}}
\newcommand{\eeqs}{\end{eqnarray}\arraycolsep5pt}
\title{Mean-dispersion principles and the Wigner transform}
\author[Boiti]{Chiara Boiti}
\address{
Dipartimento di Matematica e Informatica \\Universit\`a di Ferrara\\
Via Ma\-chia\-vel\-li n.~30\\
I-44121 Ferrara\\
Italy}
\email{chiara.boiti@unife.it}
\author[Jornet]{David Jornet}
\address{
Instituto Universitario de Matem\'atica Pura y Aplicada IUMPA\\
Universitat Po\-li\-t\`ecni\-ca de Val\`encia\\
Camino de Vera, s/n\\
E-46071 Valencia\\
Spain}
\email{djornet@mat.upv.es}
\author[Oliaro]{Alessandro Oliaro}
\address{Dipartimento di Matematica\\ Universit\`a di Torino\\
 Via Carlo Alberto n.~10\\ I-10123 Torino\\ Italy}
 \email{alessandro.oliaro@unito.it}
\begin{document}

\keywords{Mean-dispersion principle, Wigner transform, uncertainty principle, orthonormal systems,
Hermite functions}
\subjclass[2020]{Primary 42B10, 42C05; Secondary 33C45, 33C50}

\begin{abstract}
Given a function $f\in L^2(\R)$, we consider means and variances associated to $f$ and its Fourier transform $\hat{f}$, and explore their relations with the Wigner transform $W(f)$,
obtaining a simple new proof of Shapiro's mean-dispersion principle.
Uncertainty principles for orthonormal sequences in $L^2(\R)$ involving linear partial differential
operators with polynomial coefficients
and the Wigner distribution, or different Cohen class representations, are obtained, and an extension to the case of Riesz bases is studied.
\end{abstract}

\maketitle

\markboth{\sc  Mean-dispersion principles and the Wigner transform}
 {\sc C.~Boiti, D.~Jornet, A.~Oliaro}

\section{Introduction}

This paper treats uncertainty principles for families of orthonormal functions in $L^2(\mathbb{R})$ in connection with time-frequency analysis. When talking about uncertainty principles, in harmonic analysis, one refers to a class of theorems giving limitations on how much a function and its Fourier transform can be both localized at the same time. Different meanings of the word ``localized'' give rise to different uncertainty principles. For instance, referring to the most classical results (see \cite{FS} for a survey), in the Heisenberg uncertainty principle the localization of $f$ and its Fourier transform $\hat{f}$ has to do with their associated variances, in Benedicks \cite{B} it has to do with the measure of their supports, in Donoho-Stark \cite{DS} with the concept of $\varepsilon$-concentration, in Hardy \cite{H} with (exponential) decay at infinity, and so on. There are, moreover, uncertainty principles giving not only limitations on the localization of a single function and its Fourier transform, but on how such limitations behave, becoming stronger and stronger, when adding more and more elements of an orthonormal system in $L^2$. In this paper we focus in particular on results of this type involving means and variances. For $f\in L^2(\R)$ we define the {\em associated mean}
\beqs
\label{muf}
\mu(f):=\frac{1}{\|f\|^2}\int_\R t|f(t)|^2dt
\eeqs
and the {\em associated variance}
\beqs
\label{deltaf}
\Delta^2(f):=\frac{1}{\|f\|^2}\int_\R|t-\mu(f)|^2|f(t)|^2dt;
\eeqs
observe that, for $\| f\|_2=1$, such quantities are the mean and the variance of $|f|^2$. The \emph{dispersion} associated with $f$ is $\Delta(f):=\sqrt{\Delta^2(f)}$. An uncertainty principle for orthonormal sequences, that constitutes the starting point of the present paper, is due to Shapiro. We shall use throughout the paper the notation $\N_0:=\N\cup\{0\}$, and adopt the following normalization of the Fourier transform:
\beqs
\label{Four-intro}
\hat{f}(\xi)=\frac{1}{\sqrt{2\pi}}\int_\R f(t)e^{-it\xi}dt,\qquad \xi\in\R.
\eeqs
\begin{Th}[Shapiro's Mean-Dispersion Principle]
	\label{Shap-orig}
	There does not exist an infinite ortho\-normal sequence $\{ f_k\}_{k\in\N_0}$ in $L^2(\R)$ such that all $\mu(f_k)$, $\mu(\hat{f}_k)$, $\Delta(f_k)$, $\Delta(\hat{f}_k)$ are uniformly bounded.
\end{Th}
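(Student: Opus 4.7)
The plan is to reduce the four scalar hypotheses to a single operator bound for the harmonic oscillator $H=-\partial_x^2+x^2$ and then exploit the diagonalization of $H$ by the Hermite basis against the orthonormality of the sequence. First, for $\|f\|_2=1$, expanding the square in \eqref{deltaf} gives $\|xf\|_2^2=\mu(f)^2+\Delta^2(f)$; by Plancherel together with $\widehat{f'}(\xi)=i\xi\hat f(\xi)$, the analogous computation for $\hat f$ yields $\|f'\|_2^2=\|\xi\hat f\|_2^2=\mu(\hat f)^2+\Delta^2(\hat f)$. If all four quantities are bounded by some $M>0$, one concludes
\beqsn
\langle Hf_k,f_k\rangle=\|xf_k\|_2^2+\|f_k'\|_2^2\le 4M^2,\qquad k\in\N_0.
\eeqsn

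Next I would expand each $f_k$ in the normalized Hermite basis $\{h_n\}_{n\in\N_0}$, using $Hh_n=(2n+1)h_n$. Writing $f_k=\sum_n c_{k,n}h_n$, the previous bound reads $\sum_n(2n+1)|c_{k,n}|^2\le 4M^2$. Let $P_N$ be the orthogonal projection onto $\mathrm{span}\{h_0,\dots,h_{N-1}\}$. The elementary inequality $\sum_{n\ge N}(2n+1)|c_{k,n}|^2\ge(2N+1)\|(I-P_N)f_k\|_2^2$ then yields the uniform concentration
\beqsn
\|P_Nf_k\|_2^2\ge 1-\frac{4M^2}{2N+1},\qquad k\in\N_0.
\eeqsn

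Finally, I would pick $N$ so large that $4M^2/(2N+1)<1/2$, so that $\|P_Nf_k\|_2^2>1/2$ for every $k$. Since $\{f_k\}$ is orthonormal, Bessel's inequality applied to each $h_n$ gives $\sum_k|\langle f_k,h_n\rangle|^2\le\|h_n\|_2^2=1$, hence
\beqsn
\sum_k\|P_Nf_k\|_2^2=\sum_{n=0}^{N-1}\sum_k|\langle f_k,h_n\rangle|^2\le N.
\eeqsn
Combined with the pointwise lower bound, this forces the number of indices $k$ to be at most $2N$, contradicting the infiniteness of the orthonormal sequence.

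The only delicate point is the first step: the identities $\|xf\|_2^2=\mu(f)^2+\Delta^2(f)$ and $\|\xi\hat f\|_2^2=\mu(\hat f)^2+\Delta^2(\hat f)$ are what convert four apparently unrelated numerical constraints into a single bound on an expectation of the operator $H$, thereby bringing the Hermite spectral decomposition into play. After that, the rank-counting argument above is routine, and the use of Hermite functions is the natural choice, since they simultaneously diagonalize the operators $x^2$ and $-\partial_x^2$ that underlie the definitions of $\mu,\Delta^2$ and their Fourier-side counterparts.
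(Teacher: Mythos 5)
Your proof is correct, but it takes a genuinely different and more elementary route than the paper. Both arguments hinge on the same starting identity: for $\|f\|_2=1$ one has $\mu^2(f)+\Delta^2(f)+\mu^2(\hat f)+\Delta^2(\hat f)=\|xf\|_2^2+\|f'\|_2^2=\langle(M^2+D^2)f,f\rangle$, which is exactly Corollary~\ref{cor1}$(a),(b)$ (see also Example~\ref{ex1}). From there the paper passes to the phase-space side: using Moyal's formula it shows that $\{W(h_j,h_k)\}$ is an orthonormal basis of $L^2(\R^2)$ consisting of eigenfunctions of the operator $\hat L$ in \eqref{Lhat}, and then a rearrangement argument on the coefficients $\alpha_\ell=\sum_{k=0}^n|\langle f_k,h_\ell\rangle|^2$ yields the sharp bound $\sum_{k=0}^n\langle\hat LW(f_k),W(f_k)\rangle\geq(n+1)^2$, equivalent to the quantitative Theorem~\ref{JP23}, together with the characterization of equality by Hermite functions. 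You instead stay entirely on the function side: the hypothesis gives the uniform form bound $\langle Hf_k,f_k\rangle\leq 4M^2$ for $H=-\partial_x^2+x^2$, the spectral gap of $H$ above level $N$ forces $\|P_Nf_k\|_2^2>1/2$ for $N$ large, and Bessel's inequality for the rank-$N$ projection caps the number of indices by $2N$. This is shorter and self-contained for the qualitative statement, but it produces only finiteness (with a non-sharp cardinality bound of order $M^2$), not the $(n+1)^2$ lower bound or the rigidity statement, which is precisely what the paper's detour through the Wigner transform buys (along with the extensions to cross-terms, Cohen classes and Riesz bases). The one step you should spell out is the identity $\langle Hf,f\rangle=\sum_n(2n+1)|\langle f,h_n\rangle|^2$ for $f$ with $xf,f'\in L^2$; it follows from $\|xf\|_2^2+\|f'\|_2^2=2\|Af\|_2^2+\|f\|_2^2$ with the annihilation operator $A=\tfrac1{\sqrt2}(x+\partial_x)$ and $Ah_n=\sqrt n\,h_{n-1}$, and is standard, so it is a presentational rather than a mathematical gap.
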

This theorem appeared in an unpublished manuscript of Shapiro from 1991; in \cite{P} a stronger result has been proved, namely, there does not exist an orthonormal basis $\{ f_k\}_{k\in\N_0}$ of $L^2(\R)$ such that
\beqsn
\Delta(f_k),\ \Delta(\hat{f}_k),\ \mu(f_k)
\eeqsn
are uniformly bounded, while there exists an orthonormal basis $\{ f_k\}_{k\in\N_0}$ of $L^2(\R)$ such that
\beqsn
\mu(f_k),\ \mu(\hat{f}_k),\ \Delta(f_k)
\eeqsn
are uniformly bounded. Moreover the following quantitative version of Shapiro's Mean-Dispersion Principle is proved in \cite{JP}.
\begin{Th}[{\cite[Theorem 2.3]{JP}}]
	\label{JP23}
	Let $\{ f_k\}_{k\in\N_0}$ be an orthonormal sequence in $L^2(\R)$. Then for every $n\geq 0$
	\beqs
	\label{m-v-intro}
	\sum_{k=0}^n \left(\Delta^2(f_k)+\Delta^2(\hat{f}_k)+|\mu(f_k)|^2+|\mu(\hat{f}_k)|^2\right)\geq (n+1)^2.
	\eeqs
	Equality holds for every $0\leq n\leq n_0$, $n_0\in\N_0$, if and only if there exist $c_k\in\C$ with $|c_k|=1$ such that $f_k=c_k h_k$ for $k=0,\dots,n_0$, where $h_k$ are the Hermite functions on $\R$ defined as follows:
	\beqs
	\label{hermite}
	h_k(t)=\frac{1}{(2^kk!\sqrt{\pi})^{1/2}}e^{-t^2/2}H_k(t),\qquad t\in\R,
	\eeqs
	where $H_k$ is the Hermite polynomial of degree $k$ given by
	\beqsn
	H_k(t)=(-1)^ke^{t^2}\frac{d^k}{dt^k}e^{-t^2},
	\qquad t\in\R.
	\eeqsn
\end{Th}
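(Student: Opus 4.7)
The plan is to rewrite the left-hand side of \eqref{m-v-intro} as a sum of expectation values of the harmonic oscillator $H=-d^2/dt^2+t^2$, and then apply the Ky Fan minimum principle. Since $\mu(f_k)\in\R$, expanding the square in $\Delta^2(f_k)$ gives, for each unit vector $f_k$,
\[
\Delta^2(f_k)+|\mu(f_k)|^2=\int_{\R}t^2|f_k(t)|^2\,dt=\|tf_k\|^2.
\]
Applying this identity to $\hat{f}_k$ and invoking Plancherel together with $\widehat{f_k'}(\xi)=i\xi\hat{f}_k(\xi)$ yields
\[
\Delta^2(\hat{f}_k)+|\mu(\hat{f}_k)|^2=\|\xi\hat{f}_k\|^2=\|f_k'\|^2.
\]
Assuming every $f_k$ lies in the form domain of $H$ (else \eqref{m-v-intro} is trivial), summation gives
\[
\sum_{k=0}^n\Bigl(\Delta^2(f_k)+\Delta^2(\hat{f}_k)+|\mu(f_k)|^2+|\mu(\hat{f}_k)|^2\Bigr)=\sum_{k=0}^n\langle Hf_k,f_k\rangle.
\]

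The operator $H$ is non-negative and self-adjoint on $L^2(\R)$ with compact resolvent, and its spectrum is the simple sequence $\{2k+1:k\in\N_0\}$ whose $L^2$-normalized eigenbasis consists exactly of the Hermite functions $h_k$ defined in \eqref{hermite}. By the Ky Fan minimum principle for positive self-adjoint operators with compact resolvent, every orthonormal system $\{f_0,\dots,f_n\}$ in the form domain of $H$ satisfies
\[
\sum_{k=0}^n\langle Hf_k,f_k\rangle\geq\sum_{k=0}^n(2k+1)=(n+1)^2,
\]
which is exactly \eqref{m-v-intro}.

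For the equality case, suppose equality holds in \eqref{m-v-intro} for every $0\leq n\leq n_0$. Subtracting consecutive equalities forces $\langle Hf_n,f_n\rangle=2n+1$ for each such $n$, and we proceed by induction. For $n=0$, expanding $f_0=\sum_{j\in\N_0} a_jh_j$ and matching $\sum_j(2j+1)|a_j|^2=1$ with $\sum_j|a_j|^2=1$ forces $a_j=0$ for $j\geq 1$, so $f_0=c_0h_0$ with $|c_0|=1$. If $f_j=c_jh_j$ for $j<n$, then orthogonality of $f_n$ to $f_0,\dots,f_{n-1}$ yields $f_n=\sum_{j\geq n}a_jh_j$; the same matching of $\sum_{j\geq n}(2j+1)|a_j|^2=2n+1$ and $\sum_{j\geq n}|a_j|^2=1$ forces $a_j=0$ for $j>n$, giving $f_n=c_nh_n$ with $|c_n|=1$. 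The converse is immediate from $\langle Hh_k,h_k\rangle=2k+1$. The main technical step is the Ky Fan characterization of partial sums of eigenvalues for a positive self-adjoint operator with compact resolvent; once this is granted, the whole argument reduces to a direct spectral computation.
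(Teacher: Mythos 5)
Your proof is correct, but it is not the route the paper takes; it is essentially the Rayleigh--Ritz argument of Jaming--Powell that the paper explicitly sets out to avoid. You rewrite the left-hand side as $\sum_{k=0}^n\langle (M^2+D^2)f_k,f_k\rangle$ (the paper records this same reformulation in Example~\ref{ex1}) and then invoke the Ky Fan minimum principle for the harmonic oscillator, whose eigenbasis is the Hermite system with eigenvalues $2k+1$. The paper instead proves the inequality one level up, for the Wigner transform: it shows $\langle\hat LW(f,g),W(f,g)\rangle=\|f\|^2\|g\|^2(\Delta^2(g)+\Delta^2(\hat g)+\mu^2(g)+\mu^2(\hat g))$ (Lemma~\ref{lemma2}), expands $W(f_i,g_k)$ in the orthonormal basis $\{W(h_j,h_\ell)\}$ of $L^2(\R^2)$ via Moyal's formula, and establishes the eigenvalue-sum bound by an explicit bathtub-type manipulation of the coefficients $\alpha_\ell=\sum_{k\le n}|\langle g_k,h_\ell\rangle|^2$ (Theorem~\ref{th2}, Corollary~\ref{cor4}). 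The two arguments share the same combinatorial core --- your appeal to Ky Fan conceals exactly the inequality $\sum_\ell\alpha_\ell(2\ell+1)\ge\sum_{\ell=0}^n(2\ell+1)$ for $0\le\alpha_\ell\le1$ with $\sum_\ell\alpha_\ell=n+1$, which the paper proves by hand --- and the induction for the equality case is virtually identical in both. What your version buys is brevity, at the price of citing the Ky Fan principle for an unbounded self-adjoint operator as a black box; what the paper's version buys is self-containedness and generality: working with $W(f_i,g_k)$ for two different sequences yields the stronger Theorem~\ref{th2}, and the same scheme then extends to Cohen class representations and Riesz bases in later sections.
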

Observe that \eqref{m-v-intro} differs for a constant from the result in \cite{JP}, due to a different normalization of the Fourier transform. Theorem \ref{Shap-orig} is an easy consequence of Theorem \ref{JP23}; moreover, Theorem \ref{JP23} also says that the limitation on the concentration of $f_k$ and $\hat{f}_k$ become stronger and stronger by adding more and more elements from the orthonormal system, as the lower bound $(n+1)^2$ increases faster than the number of involved functions. \\[0.2cm]
In this paper we study uncertainty principles of mean-dispersion type involving quadratic time-frequency representations applied to the elements of an orthonormal system in $L^2(\R)$. In order to state our main results we need some basic definitions. The classical cross-Wigner distribution is defined as
\beqs
\label{Wfg-intro}
W(f,g)(x,\xi)=\frac{1}{\sqrt{2\pi}}\int_\R f\left(x+\frac t2\right)\overline{g\left(x-\frac t2\right)}
e^{-it\xi}dt,
\qquad f,g\in L^2(\R),
\eeqs
and we set for convenience $W(f):=W(f,f)$. Let moreover $\hat{L}$ be the linear partial differential operator in $\R^2$ defined as
\beqs
\label{Lhat}
\hat L:=\left(\frac12 D_\xi+x\right)^2+\left(\frac12 D_x-\xi\right)^2.
\eeqs
The following result (that we prove in Theorem \ref{th2} and Corollary \ref{cor4} below) constitutes a Mean-Dispersion uncertainty principle associated to the Wigner transform.
\begin{Th}\label{MDWintro}
	Let $\{ f_k\}_{k\in\N_0}$ be an orthonormal sequence in $L^2(\R)$. Then for every $n\geq 0$
	\beqs
	\label{introest}
	\sum_{k=0}^n \langle \hat{L} W(f_k),W(f_k)\rangle \geq (n+1)^2,
	\eeqs
	where as usual $\langle \cdot,\cdot\rangle$ indicates the inner product in $L^2$ (see Section \ref{sec2} for a discussion on the domain of $\hat{L}$ and the corresponding meaning of $\langle \hat{L} W(f_k),W(f_k)\rangle$). Equality in \eqref{introest} holds for every $0\leq n\leq n_0$, $n_0\in\N_0$, if and only if there exist $c_k\in\C$ with $|c_k|=1$ such that $f_k=c_k h_k$, $k=0,\dots,n_0$, where $h_k$ are the Hermite functions \eqref{hermite}.
\end{Th}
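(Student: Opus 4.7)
The plan is to transfer the inequality from the Wigner transform on $L^2(\R^2)$ back to $f_k$ on $L^2(\R)$ via the standard intertwining identities between $W(f,g)$ and the position/momentum operators on its second slot, combined with Moyal's orthogonality relation, and then to invoke Theorem \ref{JP23} directly.

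The structural observation is that the two first-order factors of $\hat L$ act, through the Wigner transform, as position and momentum on the second argument. A direct calculation from \eqref{Wfg-intro} (for $f,g\in\Sch(\R)$) yields
\beqsn
\left(\tfrac12 D_\xi + x\right) W(f,g) &=& W(f, M g), \\
\left(\tfrac12 D_x - \xi\right) W(f,g) &=& -W(f, D_t g),
\eeqsn
with $Mg(s):=sg(s)$ and $D_t=-i\partial_t$. Iterating these identities gives
\beqsn
\hat L\, W(f) &=& W(f, Hf), \qquad H:=D_t^2+t^2,
\eeqsn
the Hermite operator, whose eigenfunctions are precisely the Hermite functions \eqref{hermite} with eigenvalues $2k+1$ -- which is already consistent with their expected role as extremizers, since $\sum_{k=0}^n(2k+1)=(n+1)^2$.

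Interpreting $\langle \hat L\, W(f_k), W(f_k)\rangle$ as the quadratic form associated to the two formally self-adjoint factors $\tfrac12 D_\xi + x$ and $\tfrac12 D_x - \xi$, the intertwining identities above give
\beqsn
\langle \hat L\, W(f_k), W(f_k)\rangle &=& \|W(f_k, Mf_k)\|_{L^2(\R^2)}^2 + \|W(f_k, D_t f_k)\|_{L^2(\R^2)}^2.
\eeqsn
Moyal's orthogonality relation $\|W(u,v)\|_{L^2(\R^2)}^2 = \|u\|^2\,\|v\|^2$ combined with $\|f_k\|=1$ reduces this to $\|Mf_k\|^2 + \|D_t f_k\|^2$; Plancherel and the elementary identity $\int_\R s^2|u(s)|^2 ds = \Delta^2(u)+|\mu(u)|^2$ (valid for $\|u\|=1$) recast it as $\Delta^2(f_k)+\Delta^2(\hat f_k)+|\mu(f_k)|^2+|\mu(\hat f_k)|^2$. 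Summing over $k=0,\dots,n$ and quoting Theorem \ref{JP23} proves \eqref{introest}, and the ``if and only if'' characterization of equality transfers verbatim from the equality case of that theorem.

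The main obstacle will be technical rather than conceptual: since $\hat L$ is unbounded on $L^2(\R^2)$, the bracket $\langle \hat L\, W(f_k), W(f_k)\rangle$ must first be given a precise meaning and the intertwining identities justified on a sufficiently dense core. The natural domain is the class of $f\in L^2(\R)$ with $Mf, D_t f\in L^2(\R)$, equivalently, $f$ and $\hat f$ with finite second moments; outside this class the quadratic form is $+\infty$ and the inequality is trivial. Verifying the identities first on $\Sch(\R)$ and extending by density handles this technicality, as presumably formalised in Section \ref{sec2}.
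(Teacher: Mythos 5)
Your computation is correct: the intertwining identities $(\tfrac12 D_\xi+x)W(f,g)=W(f,Mg)$ and $(\tfrac12 D_x-\xi)W(f,g)=-W(f,Dg)$ follow from \eqref{24W}--\eqref{27W} (equivalently from \eqref{Prop22W}), they give $\hat L W(f,g)=W(f,(M^2+D^2)g)$, and hence the identity $\langle\hat LW(f_k),W(f_k)\rangle=\Delta^2(f_k)+\Delta^2(\hat f_k)+\mu^2(f_k)+\mu^2(\hat f_k)$, which is exactly Lemma~\ref{lemma2} and formula \eqref{T51}. However, your route runs in the opposite direction to the paper's. You use this identity to reduce the statement to the Jaming--Powell inequality, quoting Theorem~\ref{JP23} from \cite{JP} both for the bound and for the equality case; the paper instead proves the Wigner-side inequality from scratch (Theorem~\ref{th2} and Corollary~\ref{cor4}) by expanding $W(f_i,g_k)$ in the orthonormal basis $\{W(h_j,h_\ell)\}$ of eigenfunctions of $\hat L$, so that $\langle\hat LW(f_i,g_k),W(f_i,g_k)\rangle=\sum_{j,\ell}|\langle f_i,h_j\rangle|^2|\langle g_k,h_\ell\rangle|^2(2\ell+1)$, and then runs an elementary rearrangement argument on the weights $\alpha_\ell=\sum_{k\le n}|\langle g_k,h_\ell\rangle|^2$, with a self-contained induction for the equality case; formula \eqref{T51} is then used to \emph{deduce} Theorem~\ref{JP23}. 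As a proof of the bare statement yours is legitimate, since Theorem~\ref{JP23} is an external published result, but it forfeits the paper's main point --- a new, Rayleigh--Ritz-free proof of the sharp mean-dispersion principle --- and also the extra generality of Theorem~\ref{th2}, where the first Wigner argument need not come from an orthonormal family. Two technical points you acknowledge only in passing, both fixable but not free: the bracket $\langle\hat LW(f_k),W(f_k)\rangle$ is defined spectrally in Section~\ref{sec2} as $\sum_{j,\ell}|c_{j,\ell}|^2(2\ell+1)$, so you must check that this agrees with your quadratic-form reading via the two self-adjoint factors and that it is $+\infty$ precisely when $f_k$ or $\hat f_k$ has infinite second moment (otherwise the reduction to Theorem~\ref{JP23}, whose terms are means and variances, does not cover all $f_k\in L^2$); and the pointwise identities must be justified beyond $\Sch(\R)$, for $L^2$ functions with finite moments, which is the content of the approximation argument carried out in the proof of Proposition~\ref{prop2}.
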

We show that Theorem \ref{MDWintro} implies Theorem \ref{JP23} (and then also Theorem \ref{Shap-orig}), and in this sense it can be interpreted as a Mean-Dispersion principle associated to the Wigner transform. The advantage of Theorem \ref{MDWintro} is twofold. First, the proof is simpler than the one of Theorem \ref{JP23} in \cite{JP}. In particular, it does not need the Rayleigh-Ritz technique used there. Moreover, $\hat{L}$ is not the only operator that can be used in \eqref{introest} in order to have Mean-Dispersion principles of the kind of Theorem \ref{MDWintro}. In Sections \ref{sec3} and \ref{sec5} we give more details on this fact. Here, we just point out that we can use instead of $\hat{L}$ the multiplication operator by $x^2+\xi^2$, obtaining that (see Theorem \ref{prop5} below) if $\{ f_k\}_{k\in\N_0}$ is an orthonormal sequence in $L^2(\R)$, then for every $n\geq 0$
\beqs
\label{covintro}
\sum_{k=0}^n \int_{\R^2} (x^2+\xi^2) |W(f_k)(x,\xi)|^2 dx d\xi \geq \frac{(n+1)^2}{2},
\eeqs
and equality is characterized as in Theorem \ref{MDWintro}. We show that if $f_k$ satisfies $\mu(f_k)=\mu(\hat{f}_k)=0$ then the quantity
\beqsn
\int_{\R^2} (x^2+\xi^2) |W(f_k)(x,\xi)|^2 dx d\xi
\eeqsn
is the trace of the covariance matrix of $|W(f_k)(x,\xi)|^2$; then, comparing \eqref{covintro} with \eqref{m-v-intro} (in the case $\mu(f_k)=\mu(\hat{f}_k)=0$) we observe that we have replaced the two variances associated with $f_k$ and $\hat{f}_k$ in \eqref{m-v-intro}, with (a constant times) the trace of the covariance matrix associated with $W(f_k)$, which reflects the fact that $W(f_k)$ includes at the same time both information on $f_k$ and on $\hat{f}_k$. \\[0.1cm]
Other extensions of Theorem \ref{MDWintro} are also studied. Since there are many different time-frequency representations besides the classical Wigner, we consider the so-called {\em Cohen class}, given by all the representations $Q(f,g)$ of the form 
\beqs
\label{Ch-intro}
Q(f,g)=\frac{1}{\sqrt{2\pi}}\,\sigma* W(f,g),\qquad \sigma\in\Sch'(\R^2),\ f,g\in\Sch(\R);
\eeqs
such class contains all the most used time-frequency representations. A natural question is if in Theorem \ref{MDWintro} one can substitute $W(f_k)$ with $Q(f_k):=Q(f_k,f_k)$, and which operators can be considered instead of $\hat{L}$. We prove in Section \ref{sec4} that for a suitable class of {\em kernels} $\sigma$ in \eqref{Ch-intro} a result of the kind of Theorem \ref{MDWintro} can be formulated for representations $Q$ in the Cohen class. Finally, the Mean-Dispersion principle for the Wigner transform can be extended to Riesz bases instead of orthonormal bases. \\[0.2cm]
The paper is organized as follows. In Sections \ref{sec1} and \ref{sec2} we give basic results on the Wigner transform and on the action of the Wigner transform on Hermite functions. In Section \ref{sec3} we prove Theorem \ref{MDWintro}. Section \ref{sec5} is devoted to the study of the case of the covariance matrix associated with $W(f_k)$ and to the proof of \eqref{covintro}. In Sections \ref{sec4} and \ref{sec7} we extend the results to the Cohen class and Riesz bases.

\section{The Wigner distribution}
\label{sec1}

Besides the classical cross-Wigner distribution $W(f,g)$ for $f,g\in L^2(\R)$ defined in \eqref{Wfg-intro}
we also consider the following Wigner-like transform introduced in \cite{BO}
\beqsn
\Wig[u](x,\xi)=\frac{1}{\sqrt{2\pi}}\int_\R u\left(x+\frac t2,x-\frac t2\right)
e^{-it\xi}dt,
\qquad u\in L^2(\R^2),
\eeqsn
with standard extensions to $f,g\in\Sch'(\R)$ and $u\in\Sch'(\R^2)$.
Such operators are strictly related since
\beqsn
W(f,g)=\Wig[f\otimes\bar{g}].
\eeqsn
However, the second one has the advantage, with respect to the classical Wigner transform,
that
\beqsn
\Wig:\ \Sch(\R^2)&&\longrightarrow\Sch(\R^2)\\
\Wig:\ \Sch'(\R^2)&&\longrightarrow\Sch'(\R^2)
\eeqsn
is a linear invertible operator, being composition of a linear invertible change of variables and 
a partial Fourier transform. Indeed, denoting by $\F(f)(\xi)=\hat{f}(\xi)$
the classical Fourier transform \eqref{Four-intro}, by
\beqsn
\F_2(u)(x,\xi)=\frac{1}{\sqrt{2\pi}}\int_\R u(x,t)e^{-it\xi}dt,\qquad (t,\xi)\in\R^2,
\eeqsn
the partial Fourier transform with respect to the second variable, and by
\beqsn
\tau_s u(x,t)=u\left(x+\frac t2,x-\frac t2\right),
\eeqsn
we have that
\beqsn
\Wig[u]=\F_2\tau_s u.
\eeqsn

The inverses of the operators above are
\beqsn
\F^{-1}(F)(x)=\frac{1}{\sqrt{2\pi}}\int_\R F(\xi)e^{ix\xi}d\xi
\eeqsn
and
\beqsn
\tau_s^{-1}F(x,t)=F\left(\frac{x+t}{2},x-t\right).
\eeqsn

Moreover, denoting by
\beqsn
&&M_1u(x,y)=xu(x,y),\quad M_2u(x,y)=yu(x,y),\\
&&D_1u(x,y)=D_xu(x,y),\quad D_2u(x,y)=D_yu(x,y),
\eeqsn
for $D_x=-i\partial_x$ and $D_y=-i\partial_y$, a straightforward computation (see also
\cite{BO}) shows that
\beqs
\label{24W}
&&D_1\Wig[u]=\Wig[(D_1+D_2)u]\\
\label{25W}
&&D_2\Wig[u]=\Wig[(M_2-M_1)u]\\
\label{26W}
&&M_1\Wig[u]=\Wig\left[\frac12(M_1+M_2)u\right]\\
\label{27W}
&&M_2\Wig[u]=\Wig\left[\frac12(D_1-D_2)u\right]
\eeqs
for all $u\in\Sch(\R^2)$. \\
We write $M$ and $D$ for the multiplication and differentiation operators when just one variable is involved, so for $u\in\Sch(\R)$
$$
Mu(t)=tu(t),\quad Du(t)=-iu'(t).
$$

 Moreover we also adopt, for convenience, the following notations. First, we write $\langle\cdot,\cdot\rangle$ to indicate both the inner product in $L^2$, the duality $\Sch'$-$\Sch$ (we consider here distributions as conjugate-linear functionals), and in general the integral
 $$
 \langle g,h\rangle=\int_\mathbb{R} g(t)\overline{h(t)}\,dt
 $$
 each time such integral is finite, even though $g,h$ are not $L^2$ functions. Second, we write
 \beqs
 \label{D1}
 \langle D^{n}f, D^{m}g\rangle
 \eeqs
 for the integral
 \beqs
 \label{D2}
 \int_\mathbb{R} \xi^{n+m}\hat{f}(\xi)\overline{\hat{g}(\xi)}\,d\xi
 \eeqs
 when the last one makes sense and is finite. It coincides with
  $$
 \int_\mathbb{R} D^nf(t)\overline{D^mg(t)}\,dt
 $$
 if $D^nf, D^mg\in L^2$ by {\em Parseval's formula}
 \beqs
\label{Parceval}
\langle f, g\rangle=\langle\hat{f},\hat{g}\rangle,
\qquad\forall f,g\in L^2(\R).
\eeqs
We use the symbol $\langle\cdot,\cdot\rangle$ with analogous meaning in dimension greater than $1$.
 
With this notation, formulas \eqref{24W}-\eqref{27W} hold also for $u\in\Sch'(\R^2)$.
Let us prove, for instance, \eqref{24W}.
Since it's valid in $\Sch(\R^2)$, then for all $u,\varphi\in\Sch(\R^2)$:
\beqs
\nonumber
\langle D_1\Wig[u],\varphi\rangle&&=
\langle\Wig[(D_1+D_2)u],\varphi\rangle
=\langle\F_2\tau_s(D_1+D_2)u,\varphi\rangle\\
\nonumber
&&=\langle\tau_s(D_1+D_2)u,\F_2^{-1}\varphi\rangle
=\langle(D_1+D_2)u,\tau_s^{-1}\F_2^{-1}\varphi\rangle\\
\label{01}
&&=\langle u,(D_1+D_2)(\tau_s^{-1}\F_2^{-1}\varphi)\rangle
\eeqs
by Parseval's formula and
\beqs
\label{tau}
\langle\tau_s u,\tau_s v\rangle=\langle u, v\rangle,
\qquad\forall u,v\in L^2(\R^2).
\eeqs

On the other hand, for all $u,\varphi\in\Sch(\R^2)$,
\beqsn
\langle D_1\Wig[u],\varphi\rangle=\langle\Wig[u],D_1\varphi\rangle
=\langle\F_2\tau_s u,D_1\varphi\rangle
=\langle u, \tau_s^{-1}\F_2^{-1}(D_1\varphi)\rangle,
\eeqsn
which yields, together with \eqref{01},
\beqsn
\tau_s^{-1}\F_2^{-1}(D_1\varphi)=(D_1+D_2)(\tau_s^{-1}\F_2^{-1}\varphi).
\eeqsn
Therefore, if $u\in\Sch'(\R^2)$ and $\varphi\in\Sch(\R^2)$:
\beqsn
\langle D_1\Wig[u],\varphi\rangle&&=
\langle\Wig[u],D_1\varphi\rangle
=\langle\F_2\tau_s u,D_1\varphi\rangle
=\langle u, \tau_s^{-1}\F_2^{-1}(D_1\varphi)\rangle\\
&&=\langle u,(D_1+D_2)(\tau_s^{-1}\F_2^{-1}\varphi)\rangle
=\langle(D_1+D_2)u,\tau_s^{-1}\F_2^{-1}\varphi\rangle\\
&&=\langle\Wig[(D_1+D_2)u], \varphi\rangle,
\eeqsn
so that \eqref{24W} is valid also for $u\in\Sch'(\R^2)$.

Similarly also \eqref{25W}-\eqref{27W} hold for $u\in\Sch'(\R^2)$.

More generally, we have the following result (proved in \cite{BJO-Wigner} for $u\in\Sch(\R^2)$):

\begin{Prop}
\label{prop1}
Let $P(x,y,D_x,D_y)$ be a linear partial differential operator with polynomial coefficients.
Then for all $u\in\Sch'(\R^2)$:
\beqs
\nonumber
&&P(M_1,M_2,D_1,D_2)\Wig[u]=\\
\label{Prop22W}
&&=\Wig\left[P\left(\frac12(M_1+M_2),\frac12(D_1-D_2),D_1+D_2,M_2-M_1\right)u\right],
\eeqs
\beqs
\nonumber
&&\Wig[P(M_1,M_2,D_1,D_2)u]=\\
\label{new319W}
&&=P\left(M_1-\frac12D_2,M_1+\frac12D_2,\frac12D_1+M_2,\frac12D_1-M_2\right)
\Wig[u].
\eeqs
\end{Prop}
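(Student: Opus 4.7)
The plan is to reduce the proposition to the already-established case $u\in\Sch(\R^2)$ from \cite{BJO-Wigner} and extend to $u\in\Sch'(\R^2)$ by the same duality mechanism that the excerpt just used to pass \eqref{24W}--\eqref{27W} from Schwartz functions to tempered distributions. The natural route is iterative: write $P$ as a finite sum of monomials in the non-commuting generators $M_1,M_2,D_1,D_2$ in some fixed ordering, and peel $P(M_1,M_2,D_1,D_2)$ off $\Wig[u]$ one factor at a time, at each step invoking \eqref{24W}, \eqref{25W}, \eqref{26W}, or \eqref{27W} — all four now valid on $\Sch'(\R^2)$. Each peeling step replaces a factor by its image under the substitution
\beqsn
M_1\mapsto\tfrac12(M_1+M_2),\ \ M_2\mapsto\tfrac12(D_1-D_2),\ \ D_1\mapsto D_1+D_2,\ \ D_2\mapsto M_2-M_1,
\eeqsn
yielding \eqref{Prop22W} for monomials and, by linearity, for all of $P$.

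The only nontrivial algebraic point is that this map must define a ring homomorphism of the Weyl algebra in $(M_1,M_2,D_1,D_2)$, i.e., the four substituted operators must satisfy the same canonical commutation relations as the originals; otherwise the statement would depend on how $P$ was written. A direct computation confirms this: for instance $[\tfrac12(M_1+M_2),D_1+D_2]=\tfrac12([M_1,D_1]+[M_2,D_2])=i=[M_1,D_1]$, and $[\tfrac12(D_1-D_2),M_2-M_1]=\tfrac12(-[D_1,M_1]-[D_2,M_2])=i=[M_2,D_2]$, while the remaining commutators among the four substituted operators all vanish, matching $[M_i,M_j]=[D_i,D_j]=0$ and $[M_i,D_j]=0$ for $i\neq j$. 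An equivalent route that bypasses the monomial induction is to test \eqref{Prop22W} against $\varphi\in\Sch(\R^2)$ and move $P(M_1,M_2,D_1,D_2)$ onto $\varphi$ by formal adjunction, then invoke the already-known Schwartz-side identity and unwind via Parseval's formula \eqref{Parceval} and the isometry \eqref{tau}, exactly as was done for \eqref{24W} in the preceding paragraphs.

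Formula \eqref{new319W} then follows by using that $\Wig\colon\Sch'(\R^2)\to\Sch'(\R^2)$ is an isomorphism: apply \eqref{Prop22W} to $\Wig^{-1}[v]$ for arbitrary $v\in\Sch'(\R^2)$ and rearrange. The substitution appearing there is the inverse of the one in \eqref{Prop22W}, which a short linear-algebra computation identifies as $M_1\mapsto M_1-\tfrac12 D_2$, $M_2\mapsto M_1+\tfrac12 D_2$, $D_1\mapsto\tfrac12 D_1+M_2$, $D_2\mapsto\tfrac12 D_1-M_2$, precisely as claimed. The main obstacle is not analytic but combinatorial: keeping the non-commutativity straight so that the substitution is unambiguously defined on polynomials of arbitrary degree. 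Once the commutator check above is in place, the extension from $\Sch$ to $\Sch'$ is automatic from the elementary identities \eqref{24W}--\eqref{27W} already established on $\Sch'(\R^2)$.
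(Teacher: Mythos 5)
Your proposal is correct and follows essentially the route the paper intends: the paper states Proposition~\ref{prop1} without a written proof, citing \cite{BJO-Wigner} for the case $u\in\Sch(\R^2)$ and leaving the extension to $\Sch'(\R^2)$ to the same duality/adjunction mechanism it just demonstrated for \eqref{24W}--\eqref{27W}, which is precisely your argument (and your iterative peeling of monomials, the verification that the substitution respects the commutation relations, and the identification of \eqref{new319W} as the inverse substitution are all accurate). No gaps.
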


 The above proposition will be useful to relate the classical Wigner distribution
 $W(f)$ to the mean \eqref{muf}
 and the variance \eqref{deltaf}
associated with a function $f\in L^2(\R)$ and its Fourier transform $\hat{f}\in L^2(\R)$. 
 
 \begin{Prop}
 \label{prop2}
 Given $f\in L^2(\R)$ with finite associated means and variances of $f$ and $\hat f$, the following properties hold:
 \begin{enumerate}[(a)]
 \item
 $\langle M^2 f,f\rangle=\|f\|^2(\mu^2(f)+\Delta^2(f))$
 \item
 $\langle D^2 f,f\rangle=\|f\|^2(\mu^2(\hat{f})+\Delta^2(\hat{f}))$
 \item
 $\langle M_1 W(f),W(f)\rangle=\|f\|^4\mu(f)$
 \item
 $\langle M_2 W(f),W(f)\rangle=\|f\|^4\mu(\hat f)$
 \item
 $\langle D_1 W(f),W(f)\rangle=0$
  \item
 $\langle D_2 W(f),W(f)\rangle=0$
 \item
 $\langle D^2_1 W(f),W(f)\rangle=2\|f\|^4\Delta^2(\hat f)$
 \item
 $\langle D^2_2 W(f),W(f)\rangle=2\|f\|^4\Delta^2(f)$
 \item
 $\langle M_1D_1 W(f),W(f)\rangle=\frac i2\|f\|^4$\\
 $\langle D_1M_1 W(f),W(f)\rangle=-\frac i2\|f\|^4$
  \item
 $\langle M_2D_2 W(f),W(f)\rangle=\frac i2\|f\|^4$\\
 $\langle D_2M_2 W(f),W(f)\rangle=-\frac i2\|f\|^4$
 \item
 $\langle M_1^2 W(f),W(f)\rangle=\|f\|^4(\mu^2(f)+\frac12\Delta^2(f))$
  \item
 $\langle M_2^2 W(f),W(f)\rangle=\|f\|^4(\mu^2(\hat f)+\frac12\Delta^2(\hat f))$
  \end{enumerate}
 \end{Prop}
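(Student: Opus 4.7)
The plan is to reduce all twelve identities to one-dimensional moment computations on $f$ and $\hat f$. Items (a) and (b) serve as the building blocks. For (a), I would expand $t^2 = (t-\mu(f))^2 + 2\mu(f)(t-\mu(f)) + \mu^2(f)$ inside $\int t^2|f(t)|^2\,dt$: the middle term vanishes by the very definition of $\mu(f)$, and the two surviving terms give $\|f\|^2\Delta^2(f)+\|f\|^2\mu^2(f)$. For (b), the convention \eqref{D1}--\eqref{D2} and Parseval give $\langle D^2f,f\rangle = \int_\R \xi^2|\hat f(\xi)|^2\,d\xi$, and the same expansion around $\mu(\hat f)$ then yields the claim.

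The remaining items (c)--(l) all fall under a single, uniform strategy. Setting $u = f\otimes\bar f$ so that $W(f) = \Wig[u]$, any polynomial operator $P(M_1,M_2,D_1,D_2)$ applied to $W(f)$ can be pushed inside $\Wig$ via Proposition \ref{prop1}, giving
\begin{equation*}
\langle P\,W(f),W(f)\rangle \;=\; \langle \Wig[\tilde P u], \Wig[u]\rangle \;=\; \langle \tilde P u, u\rangle,
\end{equation*}
where $\tilde P$ is obtained by the substitutions $M_1\mapsto\tfrac12(M_1+M_2)$, $M_2\mapsto\tfrac12(D_1-D_2)$, $D_1\mapsto D_1+D_2$, $D_2\mapsto M_2-M_1$ dictated by \eqref{Prop22W}, and the second equality is Moyal's identity, a consequence of $\Wig = \F_2\tau_s$ being a composition of $L^2$-isometries (which in particular also gives $\|W(f)\|^2 = \|f\|^4$). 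Since $u$ is a pure tensor, expanding $\tilde P u$ leaves one with integrals that split as products of the one-dimensional quantities
\begin{equation*}
\|f\|^2,\ \ \|f\|^2\mu(f) = \langle Mf,f\rangle,\ \ \|f\|^2\mu(\hat f) = \langle Df,f\rangle,\ \ \langle M^2f,f\rangle,\ \ \langle D^2f,f\rangle,
\end{equation*}
each of which is then evaluated via (a) and (b). For instance, in (h) one has $D_2^2 W(f) = \Wig[(M_2-M_1)^2 u]$ by \eqref{25W}, and a direct expansion yields
\begin{equation*}
\int_{\R^2}(y-x)^2|f(x)|^2|f(y)|^2\,dx\,dy = 2\|f\|^2\langle M^2f,f\rangle - 2(\|f\|^2\mu(f))^2 = 2\|f\|^4\Delta^2(f).
\end{equation*}
The vanishing identities (e) and (f) drop out because the associated one-dimensional inner products $\langle f',f\rangle + \overline{\langle f',f\rangle} = 2\,\mathrm{Re}\,\langle f',f\rangle$ are zero, $\langle f',f\rangle = i\langle Df,f\rangle$ being purely imaginary.

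The commutator identities (i) and (j) deserve separate attention. Applying Proposition \ref{prop1} to $M_1D_1$ and $D_1M_1$ yields the conjugated operators $\tfrac12(M_1+M_2)(D_1+D_2)$ and $\tfrac12(D_1+D_2)(M_1+M_2)$ respectively, whose difference is $\tfrac12([M_1,D_1]+[M_2,D_2]) = i$; hence
\begin{equation*}
\langle M_1D_1\,W(f),W(f)\rangle - \langle D_1M_1\,W(f),W(f)\rangle = i\|W(f)\|^2 = i\|f\|^4.
\end{equation*}
Combined with $\langle D_1M_1\,W(f),W(f)\rangle = \overline{\langle M_1D_1\,W(f),W(f)\rangle}$ (from self-adjointness of $M_1$ and $D_1$), this pins down both values as $\pm\tfrac{i}{2}\|f\|^4$, and (j) is treated identically using \eqref{25W} and \eqref{27W}. \textbf{Main obstacle.} The computations themselves are mechanical; the only real care required is bookkeeping---tracking the rule $D\bar f = -\overline{Df}$ each time a $D_j$ falls on the second tensor factor $\bar f(y)$, and recognizing the recurring cross-term cancellation $2\|f\|^2\langle M^2f,f\rangle - 2(\|f\|^2\mu(f))^2 = 2\|f\|^4\Delta^2(f)$ (and its Fourier analogue) which produces the factors of $2$ and $\tfrac12$ distinguishing items (g), (h), (k), (l) from (a), (b).
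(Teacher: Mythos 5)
Your overall strategy---push polynomial operators through $\Wig$ via Proposition~\ref{prop1}, apply Moyal's identity, and reduce everything to one-dimensional moments of $f$ and $\hat f$---is exactly the paper's, and your treatment of (a), (b), (e), (f) is correct. But there are two genuine gaps. The first is the blanket appeal to Moyal's identity $\langle \Wig[\tilde P u],\Wig[u]\rangle=\langle \tilde P u,u\rangle$ for the second-order items. The hypotheses only guarantee $Mf,Df\in L^2(\R)$, not $M^2f$, $D^2f$ or $MDf\in L^2(\R)$, so for (g)--(l) the function $\tilde Pu$ contains terms such as $M^2f\otimes\bar f$ that need not lie in $L^2(\R^2)$, and $\Wig$ is an isometry only on $L^2$. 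The paper spends roughly half of its proof on precisely this point, establishing \eqref{concl1}, \eqref{concl2} and \eqref{C2} by an approximation argument ($g_j\to g$, $Mg_j\to Mg$ in $L^2$, together with \eqref{WigDistrM} and the $L^2$-continuity of $W$) before the case-by-case computation begins. You could sidestep the issue for the purely quadratic items by writing $\langle P^2W(f),W(f)\rangle=\langle PW(f),PW(f)\rangle$ for $P$ self-adjoint of first order (the paper does this for (k) and (l)), since then only $Mf\otimes\bar f$, $f\otimes\overline{Mf}$, $Df\otimes\bar f$, $f\otimes\overline{Df}$ appear; but some justification is needed one way or the other, and your plan supplies none.

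The second gap is in (i) and (j): the commutator argument does not ``pin down both values''. Writing $z=\langle M_1D_1W(f),W(f)\rangle$, your two facts give $z-\bar z=i\|f\|^4$, hence only $\operatorname{Im}z=\tfrac12\|f\|^4$; concluding $z=\tfrac i2\|f\|^4$ additionally requires $\operatorname{Re}z=\tfrac12\langle(M_1D_1+D_1M_1)W(f),W(f)\rangle=0$, which fails for a generic $L^2$ function in place of $W(f)$. It does hold here because $W(f)$ is real-valued: for real $F$ with $M_1F,D_1F\in L^2$ one has $\int x\,F\,\partial_xF\,dx\,d\xi=\tfrac12\int x\,\partial_x(F^2)\,dx\,d\xi=-\tfrac12\|F\|^2\in\R$, so $\langle M_1D_1F,F\rangle=-i\int xF\,\partial_xF$ is purely imaginary. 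That extra input (or a direct computation, which is what the paper does via Moyal and the identity \eqref{DfMf}) is missing from your argument and must be added.
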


\begin{proof}
Let us first recall that \eqref{Parceval} and \eqref{tau} imply the following 
{\em Moyal's formula} for the cross-Wigner distribution (cf. \cite[p.~66]{G})
\beqs
\label{Moyal}
\langle W(f_1,g_1),W(f_2,g_2)\rangle=\langle f_1,f_2\rangle
\overline{\langle g_1,g_2\rangle},\qquad
\forall f_1,f_2,g_1,g_2\in L^2(\R).
\eeqs

Note that the assumption that $f$ has 
finite associated mean and variance implies that $Mf\in L^2(\R)$:
\beqs
\nonumber
\langle Mf,Mf\rangle=&&
\nonumber
\int_\R y^2|f(y)|^2dy
=\int_\R(y-\mu(f)+\mu(f))^2|f(y)|^2dy\\
\nonumber
=&&\int_\R|y-\mu(f)|^2|f(y)|^2dy+2\mu(f)\int_\R(y-\mu(f))|f(y)|^2dy
+\mu^2(f)\|f\|^2\\
\nonumber
=&&\|f\|^2\Delta^2(f)+2\mu^2(f)\|f\|^2-2\mu^2(f)\|f\|^2+\mu^2(f)\|f\|^2\\
\label{M2}
=&&\|f\|^2(\Delta^2(f)+\mu^2(f)).
\eeqs
In the same way, the fact that $\hat{f}$ has finite associated mean and variance implies that $Df\in L^2(\R)$. This means that Moyal's formula \eqref{Moyal} can be applied when, in its left-hand side, $Mf$ or $Df$ appear in the arguments of the Wigner transform.

Now we analyze the case when in the left-hand side of \eqref{Moyal} the expression $W(f,M^2g)$ appears, for $f,g\in L^2(\R)$ with finite associated means and variances of $f, g, \hat{f}, \hat{g}$. Observe that, for  $f,g\in\Sch(\R)$,
\beqsn
W(f,M^2g)(x,\xi)=&&\int_{\mathbb{R}} f\left(x+\frac{t}{2}\right)\overline{\left(x-\frac{t}{2}\right)^2 g\left(x-\frac{t}{2}\right)} e^{-it\xi}\,dt \\
=&&\int_\mathbb{R} \left[2x-\left(x+\frac{t}{2}\right)\right] f\left(x+\frac{t}{2}\right)\overline{\left( x-\frac{t}{2}\right)g\left(x-\frac{t}{2}\right)} e^{-it\xi}\,dt \\
=&& 2xW(f,Mg)(x,\xi)-W(Mf,Mg)(x,\xi).
\eeqsn
Such an equality holds in fact for $f,g\in\Sch'(\R)$ and for tempered distributions it reads
\beqs
\label{WigDistrM}
W(f,M^2g)=2M_1 W(f,Mg)-W(Mf,Mg).
\eeqs
By the observations above, $Mf, Mg\in L^2(\R)$, and so from \eqref{WigDistrM} we have that $W(f,M^2g)$ is a function, and we can consider
\beqsn
\langle W(f,M^2g),W(f,g)\rangle = \int_{\R^2} W(f,M^2g)(x,\xi)\overline{W(f,g)(x,\xi)}\,dx\,d\xi.
\eeqsn
Since $g$ and $Mg$ are $L^2$-functions, we can consider as standard a sequence $g_j\in\Sch(\R)$ such that $g_j\to g$ and $Mg_j\to Mg$ for $j\to\infty$. Since $M^2 g_j\in L^2(\R)$ for every $j\in\N_0$, by \eqref{Moyal} we have 
\beqsn
\langle W(f,M^2g_j),W(f,g)\rangle = \langle f,f\rangle \overline{\langle M^2 g_j,g\rangle} = \langle f,f\rangle \overline{\langle Mg_j,Mg\rangle}
\eeqsn
Then, we have
\beqs
\label{lim1}
\langle W(f,M^2g_j),W(f,g)\rangle \to \langle f,f\rangle \overline{ \langle Mg,Mg\rangle} = \langle f,f\rangle \overline{ \langle M^2g,g\rangle}
\eeqs
as $j\to\infty$. On the other hand, by \eqref{WigDistrM} and \eqref{26W} we get
\beqsn
\langle W(f,M^2g_j),W(f,g)\rangle =&& \langle 2M_1 W(f,Mg_j)-W(Mf,Mg_j),W(f,g)\rangle \\
=&&\langle W(f,Mg_j),2M_1 W(f,g)\rangle-\langle W(Mf,Mg_j),W(f,g)\rangle \\
=&&\langle W(f,Mg_j),W(Mf,g)+W(f,Mg)\rangle-\langle W(Mf,Mg_j),W(f,g)\rangle.
\eeqsn
Since $g_j\to g$, $Mg_j\to Mg$, and $f,g,Mf,Mg\in L^2(\R)$, by the $L^2$-continuity of the Wigner transform we have
\beqsn
\langle W(f,M^2g_j),W(f,g)\rangle\to \langle W(f,Mg),W(Mf,g)+W(f,Mg)\rangle-\langle W(Mf,Mg),W(f,g)\rangle
\eeqsn
as $j\to\infty$; by the same calculations as above we get
\beqs
\label{lim2}
\langle W(f,M^2g_j),W(f,g)\rangle\to \langle W(f,M^2g),W(f,g)\rangle
\eeqs
as $j\to\infty$. From \eqref{lim1} and \eqref{lim2} we then have that $\langle W(f,M^2g),W(f,g)\rangle$ is a convergent integral and
\beqs
\label{concl1}
\langle W(f,M^2g),W(f,g)\rangle = \langle f,f\rangle \overline{ \langle M^2g,g\rangle}.
\eeqs

Recall now that for every $u,v\in\Sch'(\R)$ the following formula holds
\beqs
\label{WigFourier}
W(\hat{u},\hat{v})(x,\xi)=W(u,v)(-\xi,x);
\eeqs
then, since $\hat{f}$ and $\hat{g}$ have finite associated means and variances, the same procedure can be applied when we have $W(f,D^2g)$ instead of $W(f,M^2g)$ obtaining that, with the notation \eqref{D1}-\eqref{D2},
\beqs
\label{concl2}
\langle W(f,D^2g),W(f,g)\rangle = \langle W(\hat{f},M^2\hat{g}),W(\hat{f},\hat{g})\rangle= \langle f,f\rangle\overline{\langle D^2g,g\rangle}.
\eeqs

Similar considerations can be done for $MDf$, since
\beqsn
W(MDf,g)=&&\int\left(x+\frac t2\right)Df\left(x+\frac t2\right)
\overline{g\left(x-\frac t2\right)}e^{-it\xi}dt\\
=&&\int\left[2x-\left(x-\frac t2\right)\right]Df\left(x+\frac t2\right)
\overline{g\left(x-\frac t2\right)}e^{-it\xi}dt\\
=&&2xW(Df,g)-W(Df,Mg)
\eeqsn
is a function, being $Df,Mg\in L^2(\R)$ under the assumptions of finite associated means and variances.
Arguing as for $M^2f$ we then have
\beqs
\label{C2}
\langle W(MDf,g),W(f,g)\rangle
=\langle MDf,f\rangle\overline{\langle g,g\rangle}
=\langle Df,Mf\rangle\overline{\langle g,g\rangle}.
\eeqs

All the above considerations will be implicit from now on.

Let us now prove point $(a)$:
it follows from \eqref{M2} since $\langle M^2f,f\rangle=\langle Mf,Mf\rangle$.

$(b)$:
With the notations \eqref{D1}-\eqref{D2}, by point $(a)$ applied to $\hat f$:

\beqsn
\langle D^2f,f\rangle=\langle\xi^2\hat f,\hat f\rangle=\|\hat f\|^2(\mu^2(\hat f)+\Delta^2(\hat f))
=\| f\|^2(\mu^2(\hat f)+\Delta^2(\hat f))
\eeqsn

$(c)$:
From \eqref{26W} and Moyal's formula \eqref{Moyal}:

\beqsn
\langle M_1 W(f),W(f)\rangle=&&\langle M_1\Wig[f\otimes\bar{f}],W(f)\rangle\\
=&&\langle\Wig[\frac12(M_1+M_2)(f\otimes\bar{f})],W(f)\rangle\\
=&&\frac12(\langle W(Mf,f),W(f,f)\rangle+\langle W(f,Mf),W(f,f)\rangle)\\
=&&\frac12(\langle Mf,f\rangle\overline{\langle f,f\rangle}
+\langle f,f\rangle\overline{\langle Mf,f\rangle})
=\|f\|^4\mu(f),
\eeqsn
since $\mu(f)\in\R$.

$(d)$:
From \eqref{27W}, Moyal's and Parseval's formulas
\eqref{Moyal} and \eqref{Parceval}:

\beqsn
\langle M_2 W(f),W(f)\rangle
=&&\langle\Wig[\frac12(D_1-D_2)f\otimes\bar f],W(f)\rangle\\
=&&\frac12(\langle W(Df,f),W(f,f)\rangle+\langle W(f,Df),W(f,f)\rangle)\\
=&&\frac12(\langle Df,f\rangle\overline{\langle f, f\rangle}+\langle f,f\rangle
\overline{\langle Df,f\rangle})\\
=&&\frac12(\langle\xi\hat f,\hat f\rangle\|f\|^2+\|f\|^2\overline{\langle\xi\hat f,\hat f\rangle})
=\|f\|^4\mu(\hat f),
\eeqsn

since $\mu(\hat f)\in\R$.

$(e)$:
From \eqref{24W}, \eqref{Moyal} and \eqref{Parceval}:

\beqsn
\langle D_1 W(f),W(f)\rangle
=&&\langle\Wig[(D_1+D_2)f\otimes\bar{f}],W(f)\rangle\\
=&&\langle W(Df,f)-W(f,Df),W(f,f)\rangle\\
=&&\langle Df,f\rangle\overline{\langle f,f\rangle}-\langle f,f\rangle\overline{\langle Df,f\rangle}\\
=&&\langle\xi\hat f, \hat f\rangle\|f\|^2-\|f\|^2\overline{\langle\xi\hat f,\hat f\rangle}
=0.
\eeqsn

$(f)$:
From \eqref{25W} and \eqref{Moyal}:

\beqsn
\langle D_2W(f),W(f)\rangle
=&&\langle\Wig[(M_2-M_1)f\otimes\bar f],W(f)\rangle\\
=&&\langle W(f,Mf)-W(Mf,f),W(f,f)\rangle\\
=&&\langle f,f\rangle\overline{\langle Mf,f\rangle}-\langle Mf,f\rangle\overline{\langle f,f\rangle}
=0.
\eeqsn

$(g)$:
From \eqref{24W}, \eqref{Moyal}, \eqref{concl2}, \eqref{Parceval} and point $(a)$:

\beqsn
\langle D_1^2W(f),W(f)\rangle
=&&\langle\Wig[(D_1+D_2)^2f\otimes\bar f],W(f)\rangle\\
=&&\langle W(D^2f,f)-2W(Df,Df)+W(f,D^2f),W(f,f)\rangle\\
=&&\langle D^2f,f\rangle\overline{\langle f,f\rangle}
-2\langle Df,f\rangle\overline{\langle Df,f\rangle}+
\langle f,f\rangle\overline{\langle D^2f,f\rangle}\\
=&&\langle\xi^2\hat f, \hat f\rangle\|f\|^2-2|\langle\xi\hat f, \hat f\rangle|^2+\|f\|^2
\overline{\langle\xi^2\hat f,\hat f\rangle}\\
=&&2\|f\|^2\|\hat f\|^2(\mu^2(\hat f)+\Delta^2(\hat f))-2\mu^2(\hat f)\|\hat f\|^4
=2\|f\|^4\Delta^2(\hat f).
\eeqsn

$(h)$:
From \eqref{25W}, \eqref{Moyal}, \eqref{concl1} and point $(a)$:
\beqsn
\langle D_2^2 W(f),W(f)\rangle
=&&\langle \Wig[(M_2-M_1)^2f\otimes\bar f],W(f)\rangle\\
=&&\langle W(f,M^2f)-2W(Mf,Mf)+W(M^2f,f),W(f,f)\rangle\\
=&&\langle f,f\rangle\overline{\langle M^2f,f\rangle}-2\langle Mf,f\rangle
\overline{\langle Mf,f\rangle}+\langle M^2f,f\rangle\overline{\langle f,f\rangle}\\
=&&2\|f\|^4(\mu^2(f)+\Delta^2(f))-2\|f\|^4\mu^2(f)
=2\|f\|^4\Delta^2(f).
\eeqsn

$(i)$:
From \eqref{24W}, \eqref{26W}, \eqref{Moyal}, \eqref{C2} and \eqref{Parceval}:
\beqsn
&&\hskip-1cm \langle M_1D_1W(f),W(f)\rangle
=\langle\Wig[\frac12(M_2+M_1)(D_1+D_2)f\otimes\bar f],W(f)\rangle\\
=&&\frac12\langle\Wig[(M_2D_1+M_1D_1+M_2D_2+M_1D_2)f\otimes\bar f],W(f)\rangle\\
=&&\frac12\langle W(Df,Mf)+W(MDf,f)-W(f,MDf)-W(Mf,Df),W(f,f)\rangle\\
=&&\frac12(\langle Df,f\rangle\overline{\langle Mf,f\rangle}
+\langle Df,Mf\rangle\overline{\langle f,f\rangle}-\langle f,f\rangle
\overline{\langle Df,Mf\rangle}
-\langle Mf,f\rangle\overline{\langle Df,f\rangle})\\
=&&\frac12(\langle\xi\hat f,\hat f\rangle\overline{\mu(f)}\|f\|^2
+\|f\|^2(\langle Df,Mf\rangle-\overline{\langle Df,Mf\rangle})
-\|f\|^2\mu(f)\overline{\langle\xi\hat f,\hat f\rangle}).
\eeqsn
Since $\mu(f)\in\R$, $\langle\xi\hat f,\hat f\rangle=\mu(\hat f)\|f\|^2\in\R$ and
\beqs
\nonumber
\langle Df,Mf\rangle=&&\langle f,DMf\rangle
=i\langle f,f\rangle+\langle f,MDf\rangle\\
\label{DfMf}
=&&i\|f\|^2+\langle Mf,Df\rangle
=i\|f\|^2+\overline{\langle Df,Mf\rangle}
\eeqs
we finally have that
\beqsn
\langle M_1D_1 W(f),W(f)\rangle=\frac i2\|f\|^4.
\eeqsn
Therefore
\beqsn
\langle D_1M_1W(f),W(f)\rangle=&&
\langle M_1W(f),D_1W(f)\rangle
=\langle W(f),M_1D_1 W(f)\rangle\\
=&&\overline{\langle M_1D_1 W(f),W(f)\rangle}
=-\frac i2\|f\|^4.
\eeqsn

$(j)$:
From \eqref{25W}, \eqref{27W}, \eqref{Moyal}, \eqref{C2}, \eqref{Parceval} and \eqref{DfMf}:
\beqsn
&&\hskip-1cm\langle M_2D_2 W(f),W(f)\rangle
=\langle\Wig[\frac12(D_1-D_2)(M_2-M_1)f\otimes\bar f],W(f)\rangle\\
=&&\frac12
\langle\Wig[(D_1M_2-D_2M_2-D_1M_1+D_2M_1)f\otimes\bar f],W(f)\rangle\\
=&&\frac12\langle W(Df,Mf)-\frac1iW(f,f)+W(f,MDf),W(f,f)\rangle\\
&&-\frac12\langle\frac1iW(f,f)
+W(MDf,f)
+W(Mf,Df),W(f,f)\rangle\\
=&&\frac12(\langle Df,f\rangle\overline{\langle Mf,f\rangle}
-\frac 1i\langle f,f\rangle\overline{\langle f,f\rangle}+\langle f,f\rangle
\overline{\langle Df,Mf\rangle})\\
&&-\frac12(\frac1i\|f\|^4+
\langle Df,Mf\rangle\overline{\langle f,f\rangle}
+\langle Mf,f\rangle\overline{\langle Df,f\rangle})\\
=&&\frac12\langle\xi\hat f,\hat f\rangle\mu(f)\|f\|^2+i\|f\|^4+\frac12\|f\|^2
(\overline{\langle Df,Mf\rangle}-\langle Df,Mf\rangle)\\
&&-\frac12\|f\|^2\mu(f)\overline{\langle\xi\hat f,\hat f\rangle}
=i\|f\|^4-\frac i2\|f\|^4=\frac i2\|f\|^4.
\eeqsn
It follows that
\beqsn
\langle D_2M_2W(f),W(f)\rangle=&&
\frac1i\langle W(f,f),W(f,f)\rangle+\langle M_2D_2W(f),W(f)\rangle\\
=&&-i\|f\|^4+\frac i2\|f\|^4=-\frac i2\|f\|^4.
\eeqsn

$(k)$:
From \eqref{26W}, \eqref{Moyal} and point (a):
\beqsn
&&\hskip-1cm\langle M_1^2W(f),W(f)\rangle
=\langle M_1\Wig[f\otimes\bar f],M_1\Wig[f\otimes\bar f]\rangle\\
=&&\langle\Wig[\frac12(M_2+M_1)f\otimes\bar f],\Wig[\frac12(M_2+M_1)f\otimes\bar f]\rangle\\
=&&\frac14\langle
W(f,Mf)+W(Mf,f),W(f,Mf)+W(Mf,f)\rangle\\
=&&\frac14(\langle f,f\rangle\overline{\langle Mf,Mf\rangle}
+\langle Mf,f\rangle\overline{\langle f,Mf\rangle}
+\langle f,Mf\rangle\overline{\langle Mf,f\rangle}+
\langle Mf,Mf\rangle\overline{\langle f,f\rangle})\\
=&&\frac14(\|f\|^2\overline{\langle M^2f,f\rangle}
+2\mu^2(f)\|f\|^4+\langle M^2 f,f\rangle\|f\|^2)\\
=&&\|f\|^4\left(\frac12\Delta^2(f)+\mu^2(f)\right).
\eeqsn

$(l)$:
From \eqref{27W}, \eqref{Moyal}, \eqref{Parceval} and point (b):

\beqsn
&&\hskip-1cm\langle M_2^2W(f),W(f)\rangle=\langle M_2\Wig[f\otimes\bar f],
M_2\Wig[f\otimes\bar f]\rangle\\
=&&\langle\Wig[\frac12(D_1-D_2)f\otimes\bar f],\Wig[\frac12(D_1-D_2)f\otimes\bar f]\rangle\\
=&&\frac14\langle W(Df,f)+W(f,Df),W(Df,f)+W(f,Df)\rangle\\
=&&\frac14(\langle Df,Df\rangle\overline{\langle f,f\rangle}
+\langle f,Df\rangle\overline{\langle Df,f\rangle}
+\langle Df,f\rangle\overline{\langle f,Df\rangle}+\langle f,f\rangle
\overline{\langle Df,Df\rangle})\\
=&&\frac14(\langle D^2 f,f\rangle\|f\|^2
+\langle\hat f,\xi\hat f\rangle\overline{\langle\xi\hat f,\hat f\rangle}
+\langle\xi\hat f,\hat f\rangle\overline{\langle\hat f, \xi\hat f\rangle}
+\|f\|^2\overline{\langle D^2f,f\rangle})\\
=&&\|f\|^4\left(\frac12\Delta^2(\hat f)+\mu^2(\hat f)\right).
\eeqsn
The proof is complete.
\end{proof}

\begin{Cor}
\label{cor1}
Given $f\in L^2(\R)$ with $\|f\|=1$ and finite associated mean and variance of $f$ and $\hat f$, the following properties hold:
\begin{enumerate}[(a)]
 \item
 $\langle M^2 f,f\rangle=\mu^2(f)+\Delta^2(f)$
 \item
 $\langle D^2 f,f\rangle=\mu^2(\hat{f})+\Delta^2(\hat{f})$
 \item
 $\langle M_1 W(f),W(f)\rangle=\mu(f)$
 \item
 $\langle M_2 W(f),W(f)\rangle=\mu(\hat f)$
 \item
 $\langle D_1 W(f),W(f)\rangle=0$
  \item
 $\langle D_2 W(f),W(f)\rangle=0$
 \item
 $\langle D^2_1 W(f),W(f)\rangle=2\Delta^2(\hat f)$
 \item
 $\langle D^2_2 W(f),W(f)\rangle=2\Delta^2(f)$
 \item
 $\langle M_1D_1 W(f),W(f)\rangle=\frac i2$\\
 $\langle D_1M_1 W(f),W(f)\rangle=-\frac i2$
  \item
 $\langle M_2D_2 W(f),W(f)\rangle=\frac i2$\\
 $\langle D_2M_2 W(f),W(f)\rangle=-\frac i2$
 \item
 $\langle M_1^2 W(f),W(f)\rangle=\mu^2(f)+\frac12\Delta^2(f)$
  \item
 $\langle M_2^2 W(f),W(f)\rangle=\mu^2(\hat f)+\frac12\Delta^2(\hat f)$.
  \end{enumerate}
\end{Cor}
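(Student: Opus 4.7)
The statement of Corollary \ref{cor1} is simply the specialization of Proposition \ref{prop2} to the unit-norm case. The plan is therefore extremely short: I would observe that under the hypothesis $\|f\|=1$ we have $\|f\|^2 = \|f\|^4 = 1$, so each of the twelve identities of Corollary \ref{cor1} is obtained verbatim from the correspondingly lettered identity of Proposition \ref{prop2} by replacing the factor $\|f\|^2$ in items $(a)$--$(b)$ and the factor $\|f\|^4$ in items $(c)$--$(l)$ by $1$.

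No further computation is required: the finiteness assumptions on $\mu(f)$, $\mu(\hat{f})$, $\Delta(f)$, $\Delta(\hat{f})$ match the hypotheses of Proposition \ref{prop2}, so every inner product on the left-hand side is well-defined in the sense explained there (via the observations that $Mf, Df \in L^2(\R)$ and via the approximation argument used in the proof of Proposition \ref{prop2}). Hence the corollary follows by a single substitution.

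There is no real obstacle here; the only thing worth mentioning in the write-up is that one should explicitly invoke the normalization $\|f\|^2=1$ at the very start, and then the twelve identities are immediate consequences of the corresponding items of Proposition \ref{prop2}. Given how mechanical this is, the proof in the paper will presumably just read ``It follows from Proposition \ref{prop2} by taking $\|f\|=1$.''
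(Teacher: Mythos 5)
Your proposal is correct and matches the paper exactly: Corollary \ref{cor1} is stated immediately after Proposition \ref{prop2} with no separate proof, precisely because it is the specialization $\|f\|^2=\|f\|^4=1$ of the corresponding items of that proposition. Nothing further is needed.
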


\section{The Hermite basis}
\label{sec2}

For $k\in\N_0=\N\cup\{0\}$, let $h_k$ be the Hermite functions on $\R$ defined by \eqref{hermite}.
It is well known that $h_k$ are eigenfunctions of the Fourier transform and form an orthonormal basis in $L^2(\R)$.
Moreover they are an absolute basis in $\Sch(\R)$ (see \cite{L}).

Denoting by 
\beqsn
h_{j,k}:=\F^{-1}W(h_j,h_k),
\eeqsn
by \cite[Thms. 3.2 and 3.4]{Wong} we have that the functions $\{h_{j,k}\}_{j,k\in\N_0}$
form an orthonormal basis in $L^2(\R^2)$ and are eigenfunctions of the twisted Laplacian:
\beqsn
Lh_{j,k}(y,t)=(2k+1)h_{j,k}(y,t),\qquad j,k\in\N_0,
\eeqsn
for
\beqsn
L:=\left(D_y-\frac12 t\right)^2+\left(D_t+\frac12 y\right)^2.
\eeqsn

By Fourier transform (see \cite[Ex. 3.20]{BJO-realPW})
\beqs
\label{hhat}
\hat h_{j,k}(x,\xi)=W(h_j,h_k)(x,\xi)
\eeqs
are eigenfunctions of the operator $\hat{L}$ defined in \eqref{Lhat},
with the same eigenvalues as before, in the sense that
\beqs
\label{autovettLhat}
\hat L\hat h_{j,k}=(2k+1)\hat h_{j,k}.
\eeqs
Note that also $\{\hat h_{j,k}\}_{j,k\in\N_0}$ are in $\Sch(\R)$ and form an orthonormal basis in $L^2(\R^2)$.

More in general, following the same ideas as in
\cite[Thm. 21.2]{Wong-Weyl}, we can prove:
\begin{Th}
\label{th1}
If $\{f_k\}_{k\in\N_0}$ is an orthonormal basis in $L^2(\R)$, then $\{W(f_j,f_k)\}_{j,k\in\N_0}$
is an orthonormal basis in $L^2(\R^2)$.
\end{Th}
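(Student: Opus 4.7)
The plan is to reduce the claim to two elementary facts already available from Section~\ref{sec1}: (i) Moyal's formula \eqref{Moyal}, which directly yields orthonormality of $\{W(f_j,f_k)\}_{j,k\in\N_0}$; and (ii) the identity $W(f,g)=\Wig[f\otimes\bar g]$ together with the fact that $\Wig$ is a unitary operator on $L^2(\R^2)$, which yields completeness via the tensor-product orthonormal basis of $L^2(\R^2)$.

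First I would verify orthonormality. For any $j,k,j',k'\in\N_0$, Moyal's formula \eqref{Moyal} gives
$$
\langle W(f_j,f_k),W(f_{j'},f_{k'})\rangle = \langle f_j,f_{j'}\rangle\,\overline{\langle f_k,f_{k'}\rangle}=\delta_{jj'}\delta_{kk'},
$$
so the family is orthonormal in $L^2(\R^2)$.

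For completeness I would proceed as follows. Since $\{f_k\}_{k\in\N_0}$ is an orthonormal basis of $L^2(\R)$, so is $\{\bar f_k\}_{k\in\N_0}$, because complex conjugation is an antilinear isometry of $L^2(\R)$. A standard Fubini-plus-completeness argument then shows that $\{f_j\otimes\bar f_k\}_{j,k\in\N_0}$ is an orthonormal basis of $L^2(\R^2)$: any $F\in L^2(\R^2)$ orthogonal to all such products is forced, by completeness in each variable separately, to vanish almost everywhere. On the other hand, as recalled in Section~\ref{sec1}, we have the factorization $\Wig=\F_2\tau_s$; both factors are unitary on $L^2(\R^2)$ — $\F_2$ by Plancherel applied to the second variable, and $\tau_s$ by \eqref{tau}, which reflects that the change of variables $(x,t)\mapsto(x+t/2,x-t/2)$ has constant Jacobian of modulus one. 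Hence $\Wig$ is a unitary operator on $L^2(\R^2)$, and the identity
$$
W(f_j,f_k)=\Wig[f_j\otimes\bar f_k]
$$
exhibits $\{W(f_j,f_k)\}_{j,k\in\N_0}$ as the image of an orthonormal basis of $L^2(\R^2)$ under a unitary operator, which is therefore itself an orthonormal basis of $L^2(\R^2)$.

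There is no serious obstacle; the only point requiring mild care is the unitarity of $\Wig$, but this is essentially encoded in \eqref{Parceval} and \eqref{tau} already used throughout Section~\ref{sec1}. The argument actually gives slightly more than the statement: it shows that $\{W(f_j,f_k)\}_{j,k}$ is an orthonormal basis of $L^2(\R^2)$ as soon as the tensor product family $\{f_j\otimes\bar f_k\}$ is, which is a clean way to separate the algebraic content (Moyal) from the completeness content (unitarity of $\Wig$).
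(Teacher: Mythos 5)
Your proof is correct, but it takes a genuinely different route to completeness than the paper does. You handle orthonormality exactly as the paper does, via Moyal's formula \eqref{Moyal}. For completeness, however, you observe that $\{f_j\otimes\bar f_k\}_{j,k}$ is an orthonormal basis of $L^2(\R^2)$ and that $\Wig=\F_2\tau_s$ is unitary on $L^2(\R^2)$ (Plancherel in the second variable together with \eqref{tau} and the explicit inverse $\tau_s^{-1}$), so $W(f_j,f_k)=\Wig[f_j\otimes\bar f_k]$ is the image of an orthonormal basis under a unitary map. The paper instead argues by duality through the Weyl calculus: given $F\in L^2(\R^2)$ orthogonal to every $W(f_j,f_k)$, it forms the Weyl operator $W_F$, notes that $\langle W_Ff_j,f_k\rangle=0$ for all $j,k$ forces $W_Ff_j=0$ for all $j$, and then uses the identity $\frac{1}{\sqrt{2\pi}}\|F\|_{L^2(\R^2)}=\|W_F\|_{HS}$ from \cite{Wong-Weyl} to conclude $F=0$. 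Your argument is more elementary and self-contained, relying only on material already set up in Section \ref{sec1}, and it cleanly isolates the two ingredients (Moyal for orthogonality, unitarity of $\Wig$ for completeness); the paper's argument imports the Hilbert--Schmidt machinery of Weyl transforms, which is heavier here but ties the statement to the operator-theoretic framework used elsewhere (e.g.\ the spectral description of $\hat L$ in Section \ref{sec2}). Both are valid; yours would be a legitimate simplification of the published proof.
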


\begin{proof}
Let us first remark that if $\{f_k\}_k$ is an othonormal sequence in $L^2(\R)$ then
$\{W(f_j,f_k)\}_{j,k}$ is an orthonormal sequence in $L^2(\R^2)$ since, by
\eqref{Moyal},
\beqsn
\langle W(f_j,f_k),W(f_i,f_h)\rangle=&&\langle f_j,f_i\rangle
\overline{\langle f_k,f_h\rangle}\\
=&&\delta_{j,i}\cdot\delta_{k,h}
=\begin{cases}
1& \mbox{if} \ (j,k)=(i,h)\cr
0& \mbox{if}\  (j,k)\neq(i,h).
\end{cases}
\eeqsn

In order to prove that $\{W(f_j,f_k)\}_{j,k\in\N_0}$ is a basis for $L^2(\R^2)$, by 
\cite[Thm. 3.4.2]{C}, it is enough to prove that if $F\in L^2(\R^2)$ is such that
\beqs
\label{int0}
\int_{\R^2}F(x,\xi)W(f_j,f_k)(x,\xi)dxd\xi=0,
\qquad\forall j,k\in\N_0,
\eeqs
then $F=0$ a.e. in $\R^2$.

By \cite[Thms. 4.4 and 7.5]{Wong-Weyl} the operator
\beqsn
L^2(\R^2)\ &&\longrightarrow\Lin(L^2(\R),L^2(\R))\\
F\ &&\longmapsto W_F
\eeqsn
defined by
\beqsn
\langle W_F\varphi,\psi\rangle=\frac{1}{\sqrt{2\pi}}\int_{\R^2}F(x,\xi)W(\varphi,\psi)(x,\xi)dxd\xi
\eeqsn
is a bounded linear operator satisfying
\beqs
\label{th75W}
\|W_F\|_{\Lin(L^2,L^2)}\leq\frac{1}{\sqrt{2\pi}}\|F\|_{L^2(\R^2)}=\|W_F\|_{HS},
\eeqs
where $\|\cdot\|_{HS}$ is the Hilbert-Schmidt norm defined by (see
\cite[formula (7.1)]{Wong-Weyl}):
\beqs
\label{71W}
\|W_F\|^2_{HS}:=\sum_{j=0}^{+\infty}\|W_F f_j\|_{L^2(\R)}^2
\eeqs
for an orthonormal basis $\{f_j\}_{j\in\N_0}$ of $L^2(\R)$. The operator $W_F$ is in fact the classical Weyl operator with symbol $F$. Then
\beqsn
\langle W_F f_j,f_k\rangle=\frac{1}{\sqrt{2\pi}}\int_{\R^2}F(x,\xi)W(f_j,f_k)(x,\xi)dxd\xi=0,
\qquad\forall j,k\in\N_0,
\eeqsn
by assumption, which implies that
\beqs
\label{WFfj}
W_Ff_j=0,\qquad\forall j\in\N_0,
\eeqs
since $\{f_j\}_{j\in\N_0}$ is an orthonormal basis in $L^2(\R)$.

From \eqref{th75W} and \eqref{71W} we finally have that
$F=0$ a.e. in $\R^2$.
\end{proof}

The operator $\hat{L}$ defined in \eqref{Lhat} is unbounded on $L^2(\R^2)$ 
(see Remark~\ref{rem66} below) and defined (at least) in $\Sch(\R^2)\subset L^2(\R)$. Now, since the functions \eqref{hhat} are an orthonormal basis for $L^2(\R^2)$, every element $F\in L^2(\R^2)$ can be written as
\beqsn
F=\sum_{j,k=0}^{+\infty} c_{j,k} \hat{h}_{j,k}
\eeqsn
where $c_{j,k}=\langle F,\hat{h}_{j,k}\rangle$. Then, writing
\beqsn
F_N=\sum_{j,k=0}^{N} c_{j,k} \hat{h}_{j,k}\in\Sch(\R^2)
\eeqsn
we have from \eqref{autovettLhat}
\beqsn
\hat{L}F_N(x,\xi)
 = \sum_{j,k=0}^{N} c_{j,k} (2k+1)\hat{h}_{j,k}(x,\xi).
\eeqsn
The operator $\hat{L}$ is then the unbounded and densely defined operator with domain
\beqsn
D(\hat{L})=\{F\in L^2(\R^2) : \sum_{j,k=0}^{+\infty} c_{j,k} (2k+1)\hat{h}_{j,k}\ \text{converges\ in}\ L^2(\R^2)\}
\eeqsn
for $c_{j,k}=\langle F,\hat{h}_{j,k}\rangle$, acting on $F\in D(\hat{L})$ as
\beqsn
\hat{L}F=\sum_{j,k=0}^{+\infty} c_{j,k} (2k+1)\hat{h}_{j,k}\in L^2(\R^2).
\eeqsn
In this case
\beqsn
\langle\hat L F,F\rangle=&&\lim_{N\to+\infty}
\sum_{j,k,j',k'=0}^N\langle c_{j,k}(2k+1)\hat{h}_{j,k},c_{j',k'}\hat{h}_{j',k'}\rangle\\
=&&\lim_{N\to+\infty}
\sum_{j,k=0}^N|c_{j,k}|^2(2k+1)
=\sum_{j,k=0}^{+\infty}|c_{j,k}|^2(2k+1).
\eeqsn
In general we shall write
\beqs
\label{fuoriDL}
\langle\hat L F,F\rangle=\sum_{j,k=0}^{+\infty}|c_{j,k}|^2(2k+1),
\qquad\forall F\in L^2(\R),
\eeqs
meaning that $\langle\hat L F,F\rangle=+\infty$ if the series diverges.
Note that, being $\{\hat h_{j,k}\}_{j,k\in\N_0}$ an orthonormal basis for $L^2(\R^2)$, we have that
$F\in D(\hat L)$ if and only if $\{c_{j,k}(2k+1)\}_{j,k\in\N_0}\in\ell^2$.
This implies that the series \eqref{fuoriDL} converges (but not vice versa).

\section{Mean-Dispersion Principle}
\label{sec3}

From the results of the previous sections we obtain now an alternative formulation and a simple proof of the Shapiro's Mean-Dispersion Principle (see \cite{JP} and the references therein).
To this aim let us first prove some preliminary results.

\begin{Lemma}
\label{lemma1}
Let $\{h_k\}_{k\in\N_0}$ be the Hermite functions defined in \eqref{hermite} and
$\hat L$ as in \eqref{Lhat}. Then for every $j\in\N_0$ we have
\beqsn
\sum_{k=0}^n\langle\hat L W(h_j,h_k),W(h_j,h_k)\rangle=(n+1)^2, \qquad\forall n\in\N_0.
\eeqsn
\end{Lemma}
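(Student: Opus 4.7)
The plan is very short because the lemma reduces immediately to two facts already established in Section \ref{sec2}.

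First I would invoke the identification $W(h_j,h_k)=\hat h_{j,k}$ from \eqref{hhat} together with the eigenvalue relation \eqref{autovettLhat}, which gives
$$
\hat L\, W(h_j,h_k)=(2k+1)\,W(h_j,h_k).
$$
Next, by Moyal's formula \eqref{Moyal} applied to the orthonormal Hermite basis, $\langle W(h_j,h_k),W(h_j,h_k)\rangle=\langle h_j,h_j\rangle\,\overline{\langle h_k,h_k\rangle}=1$. Hence each summand equals
$$
\langle \hat L W(h_j,h_k),W(h_j,h_k)\rangle=(2k+1)\,\|W(h_j,h_k)\|_{L^2(\R^2)}^2=2k+1.
$$

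Summing over $k=0,\dots,n$ gives the elementary identity
$$
\sum_{k=0}^n (2k+1)=2\cdot\frac{n(n+1)}{2}+(n+1)=(n+1)^2,
$$
which is the claim.

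There is no genuine obstacle here: the lemma is a direct arithmetic consequence of the spectral decomposition of $\hat L$ on the Wigner--Hermite basis established earlier, combined with the normalization provided by Moyal's formula. The only small point worth a brief remark is that $W(h_j,h_k)\in\Sch(\R^2)\subset D(\hat L)$, so the inner product $\langle \hat L W(h_j,h_k),W(h_j,h_k)\rangle$ is literal (not in the extended sense of \eqref{fuoriDL}), and the computation above is legitimate as a true $L^2$ pairing.
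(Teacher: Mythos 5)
Your proof is correct and follows essentially the same route as the paper: both use the eigenfunction relation \eqref{autovettLhat} for $\hat h_{j,k}=W(h_j,h_k)$ together with the unit $L^2$-normalization of these functions (which you justify via Moyal's formula, while the paper cites their orthonormality directly) and then sum the odd numbers. The closing remark that $W(h_j,h_k)\in\Sch(\R^2)$ so the pairing is a genuine $L^2$ inner product is a sensible addition but does not change the argument.
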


\begin{proof}
From \eqref{autovettLhat} for all $j,k\in\N_0$ we have
\beqsn
\langle\hat L W(h_j,h_k),W(h_j,h_k)\rangle=
\langle\hat L\hat h_{j,k},\hat h_{j,k}\rangle
=\langle(2k+1)\hat h_{j,k},\hat h_{j,k}\rangle=2k+1,
\eeqsn
since $\{\hat h_{j,k}\}_{j,k}$ is an orthonormal basis in $L^2(\R^2)$.

It follows that
\beqsn
\sum_{k=0}^n\langle\hat LW(h_j,h_k),W(h_j,h_k)\rangle
=\sum_{k=0}^n(2k+1)=(n+1)^2,
\eeqsn
where the last equality is the formula for the sum of all odd numbers from 1 to $2n+1$.
\end{proof}

\begin{Lemma}
\label{lemma2}
Let $\hat L$ be the operator in \eqref{Lhat}. Then for all $f,g\in L^2(\R)$
with finite associated mean and variances of $f,g,\hat{f},\hat g$:
\begin{enumerate}[(i)]
\item
$\ds \hat L W(f,g)=W(f,(M^2+D^2)g)$,
\item
$\ds \langle\hat LW(f,g),W(f,g)\rangle
=\|f\|^2\|g\|^2(\Delta^2(g)+\Delta^2(\hat g)+\mu^2(g)+\mu^2(\hat g))$.
\end{enumerate}
In particular, $\langle\hat LW(f,g),W(f,g)\rangle\in\R$ and
if $\|f\|=\|g\|=1$ then
\beqsn
\langle\hat LW(f,g),W(f,g)\rangle
=\Delta^2(g)+\Delta^2(\hat g)+\mu^2(g)+\mu^2(\hat g).
\eeqsn
\end{Lemma}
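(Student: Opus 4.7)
The plan is to deduce (i) directly from formula \eqref{new319W} of Proposition \ref{prop1}, and then obtain (ii) by inserting (i) into the inner product and invoking the identities \eqref{concl1}--\eqref{concl2} together with parts (a) and (b) of Proposition \ref{prop2} applied to $g$.

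For (i), I would apply \eqref{new319W} to $u=f\otimes\bar g$ with the polynomial $P(M_1,M_2,D_1,D_2)=M_2^2+D_2^2$. On the tensor product side, since the $y$-variable is real and $D=-i\partial$ commutes with complex conjugation up to a sign, one checks $M_2^2(f\otimes\bar g)=f\otimes\overline{M^2g}$ and $D_2^2(f\otimes\bar g)=f\otimes\overline{D^2g}$: the single $D_2$ produces $D_2(f\otimes\bar g)=-f\otimes\overline{Dg}$, and the extra sign squares away. Hence $\Wig\bigl[(M_2^2+D_2^2)(f\otimes\bar g)\bigr]=W(f,(M^2+D^2)g)$. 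On the operator side, the substitutions prescribed by \eqref{new319W} turn $M_2^2+D_2^2$ into $(M_1+\tfrac12 D_2)^2+(\tfrac12 D_1-M_2)^2$, which, writing $(x,\xi)=(M_1,M_2)$, is precisely $\hat L$ as defined in \eqref{Lhat}. Identifying both sides gives (i), understood as an equality of tempered distributions on $\R^2$ (which is what \eqref{new319W} provides in $\Sch'(\R^2)$).

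For (ii), using (i) I split
\[
\langle \hat L W(f,g),W(f,g)\rangle=\langle W(f,M^2g),W(f,g)\rangle+\langle W(f,D^2g),W(f,g)\rangle.
\]
Each of the two brackets has already been computed under the same mean--variance hypotheses in the proof of Proposition \ref{prop2}: formula \eqref{concl1} yields $\langle W(f,M^2g),W(f,g)\rangle=\|f\|^2\,\overline{\langle M^2g,g\rangle}$, and \eqref{concl2} yields $\langle W(f,D^2g),W(f,g)\rangle=\|f\|^2\,\overline{\langle D^2g,g\rangle}$. Parts (a) and (b) of Proposition \ref{prop2}, applied to $g$, identify these with the real numbers $\|g\|^2(\mu^2(g)+\Delta^2(g))$ and $\|g\|^2(\mu^2(\hat g)+\Delta^2(\hat g))$ respectively, so the conjugations are harmless, and summing yields (ii). The reality assertion and the normalization $\|f\|=\|g\|=1$ are then immediate.

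The one point requiring care, and the principal obstacle, is that the hypothesis of finite means and variances only guarantees $Mg,Dg\in L^2(\R)$, not $M^2g$ or $D^2g$; consequently $W(f,(M^2+D^2)g)$ in (i) must in general be read in $\Sch'(\R^2)$ rather than in $L^2(\R^2)$. This does not affect (ii), because the meaning of $\langle W(f,M^2g),W(f,g)\rangle$ and $\langle W(f,D^2g),W(f,g)\rangle$ has been fixed in the proof of Proposition \ref{prop2} through the very approximation procedure that established \eqref{concl1}--\eqref{concl2}; this also explains why $\langle \hat L W(f,g),W(f,g)\rangle$ is a well-defined real number even when $W(f,g)$ may not a priori belong to the $L^2$-domain $D(\hat L)$ discussed after \eqref{fuoriDL}.
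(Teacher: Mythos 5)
Your proof is correct and follows essentially the same route as the paper: part (ii) is verbatim the paper's argument via \eqref{concl1}, \eqref{concl2} and Proposition~\ref{prop2}$(a),(b)$, and for part (i) you simply read the intertwining identity in the opposite direction, invoking \eqref{new319W} with $P=M_2^2+D_2^2$ where the paper applies \eqref{Prop22W} to $\hat L$ -- the two formulas of Proposition~\ref{prop1} are inverse to one another, so the computation is the same. Your closing remark on the distributional reading of $W(f,(M^2+D^2)g)$ and on why the pairing is nonetheless a well-defined real number matches the paper's implicit convention established in the proof of Proposition~\ref{prop2}.
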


\begin{proof}
$(i)$:\ From \eqref{Prop22W} we have 
\beqsn
&&\hskip-1cm\hat LW(f,g)=\left[\left(\frac12D_2+M_1\right)^2+\left(\frac12D_1-M_2\right)^2\right]\Wig[f\otimes\bar g]\\
=&&\Wig\left[\left(\left(\frac12(M_2-M_1)+\frac12(M_2+M_1)\right)^2+\left(\frac12
(D_1+D_2)-\frac12(D_1-D_2)\right)^2\right)f\otimes\bar g\right]\\
=&&\Wig[(M_2^2+D_2^2)f\otimes\bar g]=W(f,(M^2+D^2)g).
\eeqsn

$(ii)$: From $(i)$, \eqref{concl1}, \eqref{concl2}, and Proposition~\ref{prop2}$(a),(b)$:
\beqsn
\langle\hat LW(f,g),W(f,g)\rangle=&&
\langle W(f,(M^2+D^2)g),W(f,g)\rangle\\
=&&\langle W(f,M^2g),W(f,g)\rangle+\langle W(f,D^2g),W(f,g)\rangle\\
=&&\langle f,f\rangle\overline{\langle M^2g,g\rangle}
+\langle f,f\rangle\overline{\langle D^2g,g\rangle}\\
=&&\|f\|^2\|g\|^2(\Delta^2(g)+\mu^2(g))+\|f\|^2\|g\|^2(\Delta^2(\hat g)+\mu^2(\hat g))\\
=&&\|f\|^2\|g\|^2(\Delta^2(g)+\Delta^2(\hat g)+\mu^2(g)+\mu^2(\hat g)).
\eeqsn
\end{proof}

\begin{Th}
\label{th2}
Let $\{f_k\}_{k\in\N_0}$ be such that $\| f_k\|=1$ for every $k\in\N_0$, and let $\{g_k\}_{k\in\N_0}$ be an orthonormal sequence in $L^2(\R)$.
Then
\beqs
\label{T20}
\sum_{k=0}^n\langle\hat LW(f_i,g_k),W(f_i,g_k)\rangle\geq(n+1)^2,
\qquad\forall i,n\in\N_0.
\eeqs
\end{Th}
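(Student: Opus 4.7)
The plan is to expand each $W(f_i,g_k)$ in the orthonormal basis $\{W(h_p,h_q)\}_{p,q\in\N_0}$ of $L^2(\R^2)$ (Theorem \ref{th1}, recalling that $\hat h_{p,q}=W(h_p,h_q)$), and exploit the fact that these basis elements are eigenfunctions of $\hat L$ with eigenvalues $2q+1$. This reduces the quadratic form on the left-hand side of \eqref{T20} to a weighted sum of squared expansion coefficients, to which Parseval/Bessel can be applied.

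First I would invoke Moyal's formula \eqref{Moyal} to compute the coefficients: setting $F_k:=W(f_i,g_k)\in L^2(\R^2)$, we have
\[
c^{(k)}_{p,q}:=\langle F_k,\hat h_{p,q}\rangle=\langle W(f_i,g_k),W(h_p,h_q)\rangle=\langle f_i,h_p\rangle\,\overline{\langle g_k,h_q\rangle}.
\]
Applying \eqref{fuoriDL} gives
\[
\langle \hat L F_k,F_k\rangle=\sum_{p,q=0}^{+\infty}|\langle f_i,h_p\rangle|^2\,|\langle g_k,h_q\rangle|^2\,(2q+1).
\]
Summing over $k=0,\dots,n$, factoring, and using $\sum_{p=0}^{+\infty}|\langle f_i,h_p\rangle|^2=\|f_i\|^2=1$ (Parseval in the Hermite basis, using the hypothesis $\|f_i\|=1$), I obtain
\[
\sum_{k=0}^n\langle\hat L F_k,F_k\rangle=\sum_{q=0}^{+\infty}(2q+1)\beta_q,\qquad\text{where}\quad\beta_q:=\sum_{k=0}^n|\langle g_k,h_q\rangle|^2.
\]

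The remaining step, which is the only real work, is an elementary optimization. Since $\{g_k\}_{k\in\N_0}$ is orthonormal, Bessel's inequality applied to $h_q$ yields $\beta_q\leq\|h_q\|^2=1$; and Parseval applied to each $g_k$ in the Hermite basis gives $\sum_{q=0}^{+\infty}\beta_q=\sum_{k=0}^n\|g_k\|^2=n+1$. Setting $\alpha_q:=1$ for $q\le n$ and $\alpha_q:=0$ otherwise and $\delta_q:=\beta_q-\alpha_q$, one has $\sum_q\delta_q=0$, $\delta_q\le 0$ for $q\le n$ and $\delta_q\ge 0$ for $q>n$. Therefore
\[
\sum_{q=0}^{+\infty}(2q+1)\delta_q\ge (2n+1)\sum_{q=0}^{+\infty}\delta_q=0,
\]
because $(2q+1)\delta_q\ge(2n+1)\delta_q$ holds in both regimes (the inequality $2q+1\le 2n+1$ for $q\le n$ gets reversed upon multiplication by the non-positive $\delta_q$). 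Hence
\[
\sum_{k=0}^n\langle\hat L W(f_i,g_k),W(f_i,g_k)\rangle\ge\sum_{q=0}^n(2q+1)=(n+1)^2,
\]
proving \eqref{T20}. The main (minor) obstacle is purely bookkeeping: justifying that the possibly-divergent series in \eqref{fuoriDL} can be handled unambiguously, but because every term is nonnegative the inequality is valid whether or not $F_k\in D(\hat L)$.
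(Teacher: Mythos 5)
Your proposal is correct and follows essentially the same route as the paper: expand $W(f_i,g_k)$ in the orthonormal basis $\{W(h_p,h_q)\}$ via Moyal's formula, use the eigenvalue relation $\hat L W(h_p,h_q)=(2q+1)W(h_p,h_q)$ to reduce the left-hand side to $\sum_q(2q+1)\beta_q$ with $\beta_q\le 1$ and $\sum_q\beta_q=n+1$, and handle divergence by nonnegativity exactly as the paper does. The only difference is in the final elementary optimization, where your comparison with $\delta_q=\beta_q-\mathbf{1}_{\{q\le n\}}$ is a cleaner packaging of the same majorization that the paper carries out with its redistribution coefficients $c_k=(1-\alpha_k)/R_n$.
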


\begin{proof}
Since $W(f_i,g_k)\in L^2(\R^2)$ and the sequence $\{\hat h_{j,\ell}\}=\{W(h_j,h_\ell)\}$
defined in \eqref{hhat} is an orthonormal basis in $L^2(\R^2)$, we can write
\beqsn
W(f_i,g_k)=\sum_{j,\ell=0}^{+\infty}c^{(i,k)}_{j,\ell}W(h_j,h_\ell)
\eeqsn
with
\beqs
\label{T21}
c^{(i,k)}_{j,\ell}=&&\langle W(f_i,g_k),W(h_j,h_\ell)\rangle
=\langle f_i,h_j\rangle\overline{\langle g_k,h_\ell\rangle},
\eeqs
by \eqref{Moyal}. As in \eqref{fuoriDL} we have 
\beqs
\label{T22}
\sum_{k=0}^n\langle\hat LW(f_i,g_k),W(f_i,g_k)\rangle
=
\sum_{k=0}^n\sum_{j,\ell=0}^{+\infty}|c^{(i,k)}_{j,\ell}|^2
(2\ell+1),
\eeqs
and we can assume that for every $0\leq k\leq n$ the series in \eqref{T22} converges,
otherwise \eqref{T20} would be trivial, being the left-hand side equal to $+\infty$.

By \eqref{T21} and \eqref{T22}, we get
\beqs
\nonumber
&&\hskip-1cm\sum_{k=0}^n\langle\hat LW(f_i,g_k),W(f_i,g_k)\rangle
=\sum_{k=0}^n\sum_{j,\ell=0}^{+\infty}|\langle f_i,h_j\rangle|^2|\langle g_k,h_\ell\rangle|^2
(2\ell+1)\\
\label{T23}
=&&\sum_{j=0}^{+\infty}|\langle f_i,h_j\rangle|^2\sum_{\ell=0}^{+\infty}
\sum_{k=0}^n|\langle g_k,h_\ell\rangle|^2(2\ell+1)
=\sum_{\ell=0}^{+\infty}\left(\sum_{k=0}^n|\langle g_k,h_\ell\rangle|^2\right)(2\ell+1),
\eeqs
since $\|f_i\|^2=1$. Setting
\beqsn
\alpha_\ell:=\sum_{k=0}^n|\langle g_k,h_\ell\rangle|^2,
\eeqsn
we remark that
\beqs
\label{T24}
\sum_{\ell=0}^{+\infty}\alpha_\ell=\sum_{\ell=0}^{+\infty}
\sum_{k=0}^n|\langle g_k,h_\ell\rangle|^2
=\sum_{k=0}^n\sum_{\ell=0}^{+\infty} |\langle g_k,h_\ell\rangle|^2
=\sum_{k=0}^n\|g_k\|^2=n+1.
\eeqs

But for each $\ell\in\N_0$ 
\beqsn
\alpha_\ell\leq\sum_{k=0}^{+\infty}|\langle g_k,h_\ell\rangle|^2
\leq\|h_\ell\|^2=1,
\eeqsn
so that from \eqref{T24} we can write
\beqsn
n+1=\sum_{\ell=0}^{+\infty}\alpha_\ell=\alpha_0+\ldots+\alpha_n
+R_n
\eeqsn
for a reminder
\beqs
\label{reminder}
R_n=\sum_{\ell=n+1}^{+\infty}\alpha_\ell.
\eeqs
Note that
$\alpha_0=\ldots\alpha_n=1$ if $R_n=0$.

For all $0\leq k\leq n$ we set
\beqs
\label{ck}
c_k=
\begin{cases}
 0, & \mbox{if}\ R_n=0\cr
 \frac{1-\alpha_k}{R_n}, &\mbox{if} \ R_n>0.
\end{cases}
\eeqs
Then
\beqs
\label{DD1}
\alpha_k+c_kR_n=1\qquad\forall 0\leq k\leq n
\eeqs
and $(c_0+\ldots+c_n)R_n=R_n$, so that
\beqsn
c_0+\ldots+c_n=
\begin{cases}
1 & \mbox{if}\ R_n>0\cr
0& \mbox{if}\ R_n=0
\end{cases}
\eeqsn
and we can write
\beqs
\label{DD2}
(c_0+\ldots+c_n)\sum_{\ell=n+1}^{+\infty}\alpha_\ell(2\ell+1)=
\sum_{\ell=n+1}^{+\infty}\alpha_\ell(2\ell+1),
\eeqs
being $R_n=0$ iff $\alpha_\ell=0$ for all $\ell\geq n+1$.

We use \eqref{DD2} and \eqref{DD1} in \eqref{T23} to get
\beqs
\nonumber
&&\hskip-1cm\sum_{k=0}^n\langle\hat LW(f_i,g_k),W(f_i,g_k)\rangle
=\sum_{\ell=0}^{+\infty}\alpha_\ell(2\ell+1)\\
\nonumber
=&&\sum_{\ell=0}^n\alpha_\ell(2\ell+1)+(c_{0}+\ldots+c_{n})\sum_{\ell=n+1}^{+\infty}
\alpha_\ell(2\ell+1)\\
\nonumber
=&&\sum_{\ell=0}^n\alpha_\ell(2\ell+1)+c_0\sum_{\ell=n+1}^{+\infty}
\alpha_\ell\underbrace{(2\ell+1)}_{\geq1}
+c_1\sum_{\ell=n+1}^{+\infty}
\alpha_\ell\underbrace{(2\ell+1)}_{\geq3}\\
\nonumber
&&\ldots+c_{n-1}\sum_{\ell=n+1}^{+\infty}
\alpha_\ell\underbrace{(2\ell+1)}_{\geq 2n-1}+c_{n}\sum_{\ell=n+1}^{+\infty}
\alpha_\ell\underbrace{(2\ell+1)}_{\geq 2n+1}\\
\nonumber
\geq&&\left(\alpha_0+c_{0}\sum_{\ell=n+1}^{+\infty}\alpha_\ell\right)
+\left(\alpha_1\cdot3+c_{1}\sum_{\ell=n+1}^{+\infty}\alpha_\ell\cdot3\right)\\
\nonumber
&&\dots +\left(\alpha_{n-1}\cdot(2n-1)+
c_{n-1}\sum_{\ell=n+1}^{+\infty}\alpha_\ell\cdot(2n-1)\right)\\
\nonumber
&&+\left(\alpha_{n}\cdot(2n+1)+c_{n}\sum_{\ell=n+1}^{+\infty}\alpha_\ell\cdot(2n+1)\right)\\
\label{disug43}
=&&\sum_{k=0}^n\underbrace{\left(\alpha_k+c_kR_n\right)}_{=1}(2k+1)=\sum_{k=0}^n(2k+1)=(n+1)^2.
\eeqs

\end{proof}

\begin{Rem}\label{rem3}
\begin{em}
As a consequence of Theorem~\ref{th2} we have that if $\{f_i\}_{i\in I}$ is such that $\| f_i\|=1$ for every $i\in I$, $\{g_j\}_{j\in J}$ is an orthonormal system in $L^2(\R)$ and
\beqsn
\langle\hat LW(f_i,g_j),W(f_i,g_j)\rangle\leq A,\qquad\forall i\in I,\ j\in J,
\eeqsn
for some constant $A>0$, then $J$ must be finite (while $I$ may be infinite).
\end{em}
\end{Rem}

\begin{Cor}
\label{cor4}
If $\{f_k\}_{k\in\N_0}$ is an orthonormal sequence in $L^2(\R)$, then
\beqs
\label{C41}
\sum_{k=0}^n\langle\hat LW(f_k),W(f_k)\rangle\geq(n+1)^2,\qquad\forall n\in\N_0,
\eeqs
and the estimate is optimal, in the sense that if $f_k$ are the Hermite functions then equality 
holds in
\eqref{C41} and, conversely, given $n_0\in\N$, if equality holds in \eqref{C41} for all $n\leq n_0$, then there exist
$c_k\in\C$ with $|c_k|=1$ such that $f_k=c_kh_k$ for all $0\leq k\leq n_0$.
\end{Cor}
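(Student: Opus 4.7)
The plan is to adapt the argument of Theorem~\ref{th2} by taking both arguments of the Wigner transform to be $f_k$. Specifically, expand $W(f_k)\in L^2(\R^2)$ in the Hermite--Wigner orthonormal basis $\{\hat h_{j,\ell}\}=\{W(h_j,h_\ell)\}$: by Moyal's formula \eqref{Moyal}, the coefficients are
\[
c^{(k)}_{j,\ell}=\langle W(f_k),W(h_j,h_\ell)\rangle=\langle f_k,h_j\rangle\overline{\langle f_k,h_\ell\rangle},
\]
so $|c^{(k)}_{j,\ell}|^2=|\langle f_k,h_j\rangle|^2|\langle f_k,h_\ell\rangle|^2$. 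Using \eqref{autovettLhat} and orthonormality of the $\hat h_{j,\ell}$, exactly as in \eqref{T22},
\[
\langle\hat LW(f_k),W(f_k)\rangle=\sum_{j,\ell}(2\ell+1)|c^{(k)}_{j,\ell}|^2=\sum_\ell(2\ell+1)|\langle f_k,h_\ell\rangle|^2\cdot\sum_j|\langle f_k,h_j\rangle|^2,
\]
and the inner $j$-sum equals $\|f_k\|^2=1$.

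Summing over $k$, I then obtain $\sum_{k=0}^n \langle\hat LW(f_k),W(f_k)\rangle=\sum_\ell(2\ell+1)\gamma_\ell$ with $\gamma_\ell:=\sum_{k=0}^n|\langle f_k,h_\ell\rangle|^2$. Orthonormality of $\{f_k\}$ gives, via Bessel's inequality, $\gamma_\ell\leq\|h_\ell\|^2=1$, while Parseval yields $\sum_\ell\gamma_\ell=\sum_{k=0}^n\|f_k\|^2=n+1$. These are exactly the constraints satisfied by the quantities $\alpha_\ell$ in the proof of Theorem~\ref{th2}, so the rearrangement chain \eqref{disug43} applies verbatim and yields $\sum_\ell(2\ell+1)\gamma_\ell\geq(n+1)^2$, proving \eqref{C41}.

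For the equality analysis, on the polytope $\{(\gamma_\ell)_\ell : 0\leq\gamma_\ell\leq 1,\ \sum_\ell\gamma_\ell=n+1\}$ the linear functional $\sum_\ell(2\ell+1)\gamma_\ell$, having strictly increasing coefficients, attains its minimum $(n+1)^2$ \emph{uniquely} at $\gamma_0=\cdots=\gamma_n=1$ and $\gamma_\ell=0$ for $\ell>n$. Hence equality in \eqref{C41} at a given level $n$ forces $\sum_{k=0}^n|\langle f_k,h_\ell\rangle|^2=1$ when $\ell\leq n$ and $=0$ when $\ell>n$. Assuming equality for every $0\leq n\leq n_0$ and subtracting the condition at level $n-1$ from the one at level $n$ gives $|\langle f_n,h_\ell\rangle|^2=\delta_{n,\ell}$ for every $n\leq n_0$, whence $f_n=c_nh_n$ with $|c_n|=1$. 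The converse direction is immediate: if $f_k=c_kh_k$ with $|c_k|=1$, then by \eqref{autovettLhat} and Moyal, $\langle \hat L W(f_k),W(f_k)\rangle=\langle\hat L \hat h_{k,k},\hat h_{k,k}\rangle=2k+1$, and summing gives exactly $(n+1)^2$.

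The main obstacle is the bookkeeping of the equality case: extracting $f_n=c_nh_n$ from the cascade of equalities $\sum_{k=0}^n|\langle f_k,h_\ell\rangle|^2=1_{\ell\leq n}$ requires comparing the conditions at consecutive levels $n-1$ and $n$, which is precisely where the hypothesis ``equality for every $n\leq n_0$'' (rather than just at $n=n_0$) is used. Everything else is a direct specialization of the method already developed for Theorem~\ref{th2}.
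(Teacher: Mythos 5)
Your proof is correct, and it follows the paper's overall skeleton for the inequality: specializing the argument of Theorem~\ref{th2} to $g_k=f_k$ (the paper simply cites Theorem~\ref{th2} rather than re-running the computation, but the content is identical). Where you genuinely diverge is in the equality analysis. The paper first telescopes the sums to get $\langle\hat LW(f_n),W(f_n)\rangle=2n+1$ for each $n\le n_0$, and then runs an induction on $n$: having shown $f_k=c_kh_k$ for $k<n$, it uses the orthogonality of $f_n$ to $f_0,\dots,f_{n-1}$ to kill the coefficients $\langle f_n,h_k\rangle$ for $k<n$, and then compares $\sum_{k\ge n}(2k+1)|\langle f_n,h_k\rangle|^2$ with $(2n+1)\sum_{k\ge n}|\langle f_n,h_k\rangle|^2$ to kill those with $k>n$. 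You instead observe that the linear functional $\sum_\ell(2\ell+1)\gamma_\ell$ attains its minimum $(n+1)^2$ on the polytope $\{0\le\gamma_\ell\le1,\ \sum_\ell\gamma_\ell=n+1\}$ uniquely at the indicator of $\{0,\dots,n\}$, so equality at level $n$ pins down all the aggregated coefficients $\gamma_\ell=\sum_{k=0}^n|\langle f_k,h_\ell\rangle|^2$ at once, and subtracting consecutive levels isolates $|\langle f_n,h_\ell\rangle|^2=\delta_{n\ell}$. This is a legitimate and arguably cleaner variational route: it avoids the induction and the explicit use of $f_n\perp f_k$ in the final step (orthonormality enters only through Bessel's inequality $\gamma_\ell\le1$ and Parseval $\sum_\ell\gamma_\ell=n+1$). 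The one point you should make explicit is why the minimizer is unique in this infinite-dimensional polytope: if some $\gamma_{\ell_0}<1$ with $\ell_0\le n$, then necessarily some $\gamma_{\ell_1}>0$ with $\ell_1>n$, and transferring a small amount of mass from $\ell_1$ to $\ell_0$ stays in the polytope while strictly decreasing the functional below the established lower bound $(n+1)^2$ from \eqref{disug43} --- a contradiction. With that perturbation argument spelled out, your proof is complete and fully rigorous.
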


\begin{proof}
The inequality \eqref{C41} is a particular case of Theorem~\ref{th2} for $g_k=f_k$.

In order to prove that the inequality is optimal we follow the same ideas as in
\cite[Thm.~2.3]{JP}. If
$f_k=h_k$ then \eqref{C41} is an equality by Lemma~\ref{lemma1}.

Now, if the equality holds in \eqref{C41} for all $0\leq n\leq n_0$, then for all 
$0\leq n\leq n_0$
\beqs
\nonumber
\langle\hat LW(f_n),W(f_n)\rangle=&&
\sum_{k=0}^n\langle\hat LW(f_k),W(f_k)\rangle
-\sum_{k=0}^{n-1}\langle\hat LW(f_k),W(f_k)\rangle\\
\label{C42}
=&&(n+1)^2-n^2=2n+1.
\eeqs

Since $\{\hat h_{j,k}\}_{j,k\in\N_0}=\{W(h_j,h_k)\}_{j,k\in\N_0}$ is an orthonormal basis in 
$L^2(\R^2)$ we have that
\beqsn
W(f_n)=\sum_{j,k=0}^{+\infty}\langle W(f_n),\hat h_{j,k}\rangle\hat h_{j,k},
\eeqsn
and hence, by \eqref{T22} and \eqref{Moyal}:
\beqs
\nonumber
&&\hskip-1cm\langle\hat L W(f_n),W(f_n)\rangle
=\sum_{j,k=0}^{+\infty}|\langle W(f_n),\hat h_{j,k}\rangle|^2(2k+1)\\
\nonumber
=&&\sum_{j,k=0}^{+\infty}|\langle W(f_n,f_n),W(h_j,h_k)\rangle|^2(2k+1)
=\sum_{j,k=0}^{+\infty}|\langle f_n,h_j\rangle|^2|\langle f_n,h_k\rangle|^2(2k+1)\\
\label{C43}
=&&\sum_{k=0}^{+\infty} \|f_n\|^2|\langle f_n,h_k\rangle|^2(2k+1)
=\sum_{k=0}^{+\infty} |\langle f_n,h_k\rangle|^2(2k+1).
\eeqs

We now proceed by induction on $n\in\N_0$.
From \eqref{C42} and \eqref{C43} for $n=0$ we have 
\beqsn
\sum_{k=0}^{+\infty}|\langle f_0,h_k\rangle|^2(2k+1)
=\langle\hat LW(f_0),W(f_0)\rangle=1
=\|f_0\|^2=\sum_{k=0}^{+\infty}|\langle f_0,h_k\rangle|^2,
\eeqsn
and hence
\beqsn
\langle f_0,h_k\rangle=0,\qquad\forall k\geq1,
\eeqsn
i.e. $f_0=c_0h_0$ for some $c_0\in\C$ with $|c_0|=1$, since
$\|f_0\|=\|h_0\|=1$.

Let us assume now that
\beqsn
f_k=c_kh_k,\quad c_k\in\C,\ |c_k|=1,\qquad k=0,1,\ldots,n-1,
\eeqsn
and let us prove that
\beqsn
f_n=c_nh_n,\quad c_n\in\C,\ |c_n|=1.
\eeqsn
Indeed,
\beqsn
\sum_{k=n}^{+\infty}|\langle f_n,h_k\rangle|^2(2k+1)=
\sum_{k=0}^{+\infty}|\langle f_n,h_k\rangle|^2(2k+1)
\eeqsn
since $\langle f_n,h_k\rangle=0$ for $0\leq k\leq n-1$ because $f_n$ is
orthogonal to $f_k=c_kh_k$ by inductive assumption.

Thus, by \eqref{C43} and \eqref{C42}, we have
\beqsn
\sum_{k=n}^{+\infty}|\langle f_n,h_k\rangle|^2(2k+1)=&&\langle\hat LW(f_n),W(f_n)\rangle
=2n+1=(2n+1)\|f_n\|^2\\
=&&(2n+1)\sum_{k=0}^{+\infty}|\langle f_n,h_k\rangle|^2
=\sum_{k=n}^{+\infty}(2n+1)|\langle f_n,h_k\rangle|^2
\eeqsn
again by inductive assumption.

Therefore $\langle f_n,h_k\rangle=0$ for all $k>n$ (and for $0\leq k\leq n-1$
by inductive assumption), which implies that $f_n=c_nh_n$ for some $c_n\in\C$ with
$|c_n|=1$ since $\|f_n\|=\|h_n\|=1$.
\end{proof}

From Corollary~\ref{cor4} we have, as in Remark~\ref{rem3}, that if
\beqsn
\langle\hat LW(f_j),W(f_j)\rangle\leq A,\qquad\forall j\in J,
\eeqsn
then $J$ must be finite.

Moreover, since
\beqs
\label{T51}
\langle\hat LW(f_k),W(f_k)\rangle=\mu^2(f_k)
+\mu^2(\hat f_k)+\Delta^2(f_k)+\Delta^2(\hat f_k)
\eeqs
by Lemma~\ref{lemma2}, we have obtained a simple proof of Theorem \ref{JP23} (the sharp
Mean-Dispersion Principle \cite[Thm. 2.3]{JP}), and then also of
Theorem~\ref{Shap-orig} (the original Shapiro's Mean-Dispersion Principle).

Formula \eqref{T51} says that Corollary \ref{cor4} is exactly a reformulation of Theorem \ref{JP23}, and in this sense Theorem \ref{th2} and Corollary \ref{cor4} can be seen as Mean-Dispersion principles related with the Wigner transform. On the other hand we observe that 
working with the Wigner transform gives several advantages. First of all we have more generality since in Theorem \ref{th2} we can consider different arguments $f_i, g_k$ in the cross-Wigner distribution; moreover the proofs with the Wigner transform are simpler and more self-contained with respect to \cite{JP}. Another advantage is that we have information on the Wigner transform of an orthonormal sequence $\{ f_k\}_{k\in\N_0}$ rather than on $f_k$ and $\hat{f}_k$ themselves, and this gives more possibilities on how such information can be treated and written. In Section \ref{sec5} we give a Mean-Dispersion principle on the trace of the covariance matrix associated to the Wigner transform; here we start by noting that, from Corollary~\ref{cor1}, the quantity
$
\mu^2(f_k)+\mu^2(\hat f_k)+\Delta^2(f_k)+\Delta^2(\hat f_k)
$
in \eqref{T51} can be written not only as $\langle\hat LW(f_k),W(f_k)\rangle$, but also through
many other operators, as we can see in the following examples.

\begin{Ex}
\label{ex1}
\begin{em}
For all $f\in L^2(\R)$ with $\|f\|=1$ and finite associated mean and variance of $f$ and $\hat f$
\beqsn
\mu^2(f)+\mu^2(\hat f)+\Delta^2(f)+\Delta^2(\hat f)=\langle M^2f,f\rangle+\langle D^2f,f\rangle
\eeqsn
by Corollary~\ref{cor1}$(a),(b)$. Therefore formula \eqref{m-v-intro} for an orthonormal sequence $\{ f_k\}_{k\in\N_0}$ in $L^2(\R)$ can be rewritten as
\beqsn
\sum_{k=0}^n\langle(M^2+D^2)f_k,f_k\rangle\geq(n+1)^2,\qquad
\forall n\in\N_0.
\eeqsn
\end{em}
\end{Ex}
\begin{Ex}
\label{ex2}
\begin{em}
For all $f\in L^2(\R)$ with $\|f\|=1$ and finite associated mean and variance of $f$ and $\hat f$
we have from Corollary~\ref{cor1}$(g),(h),(k),(l)$:
\beqsn
\langle[\frac14(D_1^2+D_2^2)+(M_1^2+M_2^2)]W(f),W(f)\rangle
=\Delta^2(f)+\Delta^2(\hat f)+\mu^2(f)+\mu^2(\hat f)
\eeqsn 
and hence for an orthonormal sequence $\{f_k\}_{k\in\N_0}\subset L^2(\R)$
\beqsn
\sum_{k=0}^n\langle PW(f_k),W(f_k)\rangle\geq(n+1)^2,\qquad\forall n\in\N_0,
\eeqsn
for $P=\frac14(D_1^2+D_2^2)+(M_1^2+M_2^2)$, by Theorem~\ref{JP23}.
\end{em}
\end{Ex}

We can also combine, for example, the operators of Examples~\ref{ex1} and \ref{ex2}, or
add combinations of $D_1$, $D_2$, $M_1D_1-M_2D_2$, by
Corollary~\ref{cor1}$(e),(f),(i),(j)$.

\section{Covariance}
\label{sec5}
In this section we give an uncertainty principle involving
the trace of the covariance matrix of the square of the Wigner distribution $|W(f)(x,\xi)|^2$, and explore its relations with Theorem~\ref{JP23}.

To this aim, let us first recall some notions about mean and covariance for a function of two variables $\rho(x,y)\in L^1(\R^2)$. We set
\beqs
\label{CO1}
\rho_X(x):=\int_\R\rho(x,y)dy,\qquad
\rho_Y(y):=\int_\R\rho(x,y)dx,
\eeqs
and then consider the {\em means}
\beqs
\label{CO2}
M(X):=\int_\R x\rho_X(x)dx,\qquad
M(Y):=\int_\R y\rho_Y(y)dy,
\eeqs
and the {\em covariances}
\beqsn
&&C(X,Y):=\int_{\R^2}(x-M(X))(y-M(Y))\rho(x,y)dxdy=C(Y,X)\\
&&C(X,X)=\int_{\R^2}(x-M(X))^2\rho(x,y)dxdy\\
&&C(Y,Y)=\int_{\R^2}(y-M(Y))^2\rho(x,y)dxdy.
\eeqsn
The {\em covariance matrix}
\beqsn
\left(\begin{matrix}
C(X,X)&C(X,Y)\\
C(Y,X)&C(Y,Y)
\end{matrix}\right)
\eeqsn
is symetric and its {\em trace} is given by
\beqs
\nonumber
C(X,X)+C(Y,Y)=&&\int_{\R^2}
\left((x-M(X))^2+(y-M(Y))^2\right)\rho(x,y)dxdy\\
\label{x2y2}
=&&\int_{\R^2}(x^2+y^2)\rho(x,y)dxdy\\
\nonumber
&&-2M(X)\int_{\R^2}x\rho(x,y)dxdy
\nonumber
-2M(Y)\int_{\R^2}y\rho(x,y)dxdy\\
\nonumber
&&+(M^2(X)+M^2(Y))\int_{\R^2}\rho(x,y)dxdy.
\eeqs
If $\rho(x,y)$ has null means $M(X)=M(Y)=0$, then \eqref{x2y2} represents the
trace of the covariance matrix of $\rho(x,y)$.

For $f\in L^2(\R)$ we can consider $\rho(x,\xi)=|W(f)(x,\xi)|^2\in L^1(\R^2)$ since
$W(f)\in L^2(\R^2)$.
It is then interesting to consider the quantity in \eqref{x2y2}
\beqsn
\int_{\R^2}(x^2+\xi^2)|W(f)(x,\xi)|^2dxd\xi,
\eeqsn
which is related to means and variances of $f$ and $\hat f$; indeed, if $f\in L^2(\R)$ with $\|f\|=1$, by Corollary~\ref{cor1}$(k),(l)$ we have
\beqs
\nonumber
&&\hskip-1cm\int_{\R^2}(x^2+\xi^2)|W(f)(x,\xi)|^2dxd\xi\\
\label{P54}
=&&\langle(M_1^2+M_2^2)W(f),W(f)\rangle\\
\label{P52}
=&&\mu^2(f)+\frac12\Delta^2(f)+\mu^2(\hat f)+\frac12\Delta^2(\hat f)\\
\label{P53}
\geq&&\frac12(\mu^2(f)+\mu^2(\hat f)+\Delta^2(f)+\Delta^2(\hat f))
\eeqs
and the equality in \eqref{P53} holds if and only if $\mu(f)=\mu(\hat f)=0$.
In particular, since the Hermite functions satisfy $\mu(h_k)=\mu(\hat h_k)=0$ by
\cite[Ex.~2.4]{JP}, from Theorem~\ref{JP23} we have the following:

\begin{Th}
\label{prop5}
 If $\{f_k\}_{k\in\N_0}$ is an orthonormal sequence in $L^2(\R)$, then
\beqs
\label{P51}
\sum_{k=0}^n\int_{\R^2}(x^2+\xi^2)|W(f_k)(x,\xi)|^2dxd\xi
\geq\frac{(n+1)^2}{2},\qquad\forall n\in\N_0.
\eeqs
Moreover, given $n_0\in\N$, the equality holds for all $n\leq n_0$ if and only if there exist $c_k\in\C$ with $|c_k|=1$
such that $f_k=c_kh_k$ for all $0\leq k\leq n_0$.
\end{Th}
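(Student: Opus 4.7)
The plan is to reduce Theorem~\ref{prop5} directly to the sharp Mean-Dispersion Principle (Theorem~\ref{JP23}) via the pointwise identity \eqref{P52} that is derived just above the statement. If some $f_k$ has an infinite associated mean or variance then the corresponding integral on the left-hand side of \eqref{P51} is $+\infty$ and the inequality is trivial; so I may restrict to the case in which all the means and variances of the $f_k$'s and $\hat f_k$'s are finite, so that \eqref{P52} applies termwise.

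Under this assumption I rewrite \eqref{P52} as the sum of two non-negative pieces,
\[
\int_{\R^2}(x^2+\xi^2)|W(f_k)(x,\xi)|^2\,dx\,d\xi
=\tfrac12\bigl(\mu^2(f_k)+\mu^2(\hat f_k)+\Delta^2(f_k)+\Delta^2(\hat f_k)\bigr)+\tfrac12\bigl(\mu^2(f_k)+\mu^2(\hat f_k)\bigr),
\]
and sum on $0\le k\le n$. The first resulting sum is bounded below by $(n+1)^2/2$ by Theorem~\ref{JP23}, and the second is manifestly non-negative; together they yield \eqref{P51}.

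For the equality characterization, suppose equality holds in \eqref{P51} for every $0\le n\le n_0$. Then both non-negative contributions in the decomposition must saturate for each such $n$. Saturation of the first forces
\[
\sum_{k=0}^n\bigl(\mu^2(f_k)+\mu^2(\hat f_k)+\Delta^2(f_k)+\Delta^2(\hat f_k)\bigr)=(n+1)^2 \qquad \text{for every } 0\le n\le n_0,
\]
which by Theorem~\ref{JP23} is equivalent to $f_k=c_kh_k$ with $|c_k|=1$ for $0\le k\le n_0$. Conversely, if $f_k=c_kh_k$ on that range, Theorem~\ref{JP23} delivers equality in the mean-dispersion bound, while $\mu(h_k)=\mu(\hat h_k)=0$ (from \cite[Ex.~2.4]{JP}) annihilates the remaining $\mu^2$ contribution, and \eqref{P51} becomes an equality.

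The whole argument is essentially a one-liner once \eqref{P52} and Theorem~\ref{JP23} are in hand, so I do not foresee a real obstacle. The only point worth flagging is that \eqref{P51} carries two independent sources of slack relative to the exact identity, namely the factor $\tfrac12$ on the variance terms coming from Corollary~\ref{cor1}$(k),(l)$ and the discarded means $\mu^2(f_k)+\mu^2(\hat f_k)$; the clean equality clause rests on the observation that both vanish simultaneously precisely when $f_k=c_kh_k$.
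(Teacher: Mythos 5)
Your proof is correct and follows essentially the same route as the paper: both reduce \eqref{P51} to Theorem~\ref{JP23} via the identity \eqref{P52}, and both characterize equality by observing that the factor $\tfrac12$ on the variances and the discarded means must saturate simultaneously, which forces $\mu(f_k)=\mu(\hat f_k)=0$ and equality in \eqref{m-v-intro}. Your explicit splitting into two non-negative slack terms is just a mild repackaging of the paper's inequality \eqref{P53}.
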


\begin{proof}
The inequality \eqref{P51} immediately follows from
\eqref{P53} and Theorem~\ref{JP23}.
If $f_k$ are multiples of the Hermite functions $c_k h_k$ with $|c_k|=1$, then the equality holds because of \eqref{P52}, the fact that $\mu(h_k)=\mu(\hat h_k)=0$, and Theorem~\ref{JP23}.

In the other direction, if the equality holds in \eqref{P51} for all $n\leq n_0$, then from \eqref{P52} we have, for $n\leq n_0$,
\beqsn
\sum_{k=0}^n(\mu^2(f_k)+\mu^2(\hat f_k)+\frac12\Delta^2(f_k)+\frac12\Delta^2(\hat f_k))
=\frac{(n+1)^2}{2}
\eeqsn
and hence, from Theorem~\ref{JP23}:
\beqsn
\begin{cases}
\ds\mu(f_k)=\mu(\hat f_k)=0&\forall\,0\leq k\leq n\cr
\ds\sum_{k=0}^n(\Delta^2(f_k)+\Delta^2(\hat f_k))=(n+1)^2.
\end{cases}
\eeqsn

Then we conclude from Theorem~\ref{JP23}.
\end{proof}

Let us remark that from Theorem~\ref{prop5} we immediately get the following uncertainty principle for the covariance matrix:

\begin{Cor}
\label{cor5}
If $\{f_j\}_{j\in J}$ is an orthonormal sequence in $L^2(\R)$ with zero means
$\mu(f_j)=\mu(\hat f_j)=0$, and if the trace of the covariance matrix of $|W(f_j)(x,\xi)|^2$ is
uniformly bounded in $j$, we have
\beqsn
\int_{\R^2}(x^2+\xi^2)|W(f_j)(x,\xi)|^2dxd\xi\leq A,\qquad\forall j\in J,
\eeqsn
for some $A>0$. In particular,  $J$ is finite.
\end{Cor}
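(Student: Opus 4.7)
The plan is to observe that under the zero-mean hypothesis, the trace of the covariance matrix of $|W(f_j)|^2$ coincides exactly with the integral $\int_{\R^2}(x^2+\xi^2)|W(f_j)(x,\xi)|^2\,dx\,d\xi$, and then to derive finiteness of $J$ from Theorem~\ref{prop5} by a counting argument.

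First, I would translate the hypothesis. Setting $\rho(x,\xi)=|W(f_j)(x,\xi)|^2\in L^1(\R^2)$ in the notation of \eqref{CO1}--\eqref{CO2}, Corollary~\ref{cor1}(c),(d) yield
$$M(X)=\int_{\R^2} x\,|W(f_j)(x,\xi)|^2\,dx\,d\xi=\langle M_1 W(f_j),W(f_j)\rangle=\mu(f_j)=0,$$
and analogously $M(Y)=\mu(\hat f_j)=0$. Substituting these into formula \eqref{x2y2} collapses the last three terms and gives
$$C(X,X)+C(Y,Y)=\int_{\R^2}(x^2+\xi^2)|W(f_j)(x,\xi)|^2\,dx\,d\xi.$$
Thus the assumed uniform bound on the trace of the covariance matrix is precisely the displayed inequality in the statement, with the same constant $A$.

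Second, I would argue by contradiction on the cardinality of $J$. If $J$ were infinite, one could extract a countable orthonormal subsequence $\{f_{j_k}\}_{k\in\N_0}$, and Theorem~\ref{prop5} applied to it would give, for every $n\in\N_0$,
$$\frac{(n+1)^2}{2}\le\sum_{k=0}^n\int_{\R^2}(x^2+\xi^2)|W(f_{j_k})(x,\xi)|^2\,dx\,d\xi\le (n+1)A,$$
hence $A\ge (n+1)/2$. Letting $n\to\infty$ contradicts the fixed bound $A$, so $J$ must be finite.

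There is no genuine obstacle: the entire proof is the identification between the trace of the covariance matrix and the integral appearing in Theorem~\ref{prop5}, which is immediate once the zero-mean hypothesis is used to annihilate the correction terms in \eqref{x2y2}. After that, the quadratic lower bound $(n+1)^2/2$ from Theorem~\ref{prop5} versus the linear upper bound $(n+1)A$ forces $J$ to be finite.
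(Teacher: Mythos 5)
Your proposal is correct and follows essentially the same route as the paper: identify the trace of the covariance matrix with $\int_{\R^2}(x^2+\xi^2)|W(f_j)|^2\,dx\,d\xi$ via Corollary~\ref{cor1}$(c),(d)$ and formula \eqref{x2y2} under the zero-mean hypothesis, then invoke Theorem~\ref{prop5}. The only difference is that you spell out the counting argument $\frac{(n+1)^2}{2}\le (n+1)A$ that the paper leaves implicit in ``the thesis immediately follows.''
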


\begin{proof}
From Corollary~\ref{cor1}$(c),(d)$ we have that
\beqsn
M(X)=&&\int_{\R^2}x|W(f_k)(x,\xi)|^2dxd\xi
=\langle M_1W(f_k),W(f_k)\rangle=\mu(f_k)=0\\
M(Y)=&&\langle M_2W(f_k),W(f_k)\rangle=\mu(\hat f_k)=0
\eeqsn
by assumption, and hence from \eqref{x2y2}:
\beqsn
C(X,X)+C(Y,Y)=\int_{\R^2}(x^2+\xi^2)|W(f_k)(x,\xi)|^2dxd\xi.
\eeqsn
The thesis thus immediately follows from Theorem~\ref{prop5}.
\end{proof}

Note that Corollary~\ref{cor5} can be stated also in terms of the variances of 
$|W(f_j)(x,\xi)|^2$ since, in general, the {\em variances}
\beqsn
&&V(X)=\int_\R(x-M(X))^2\rho_X(x)dx,\\
&&V(Y)=\int_\R(y-M(Y))^2\rho_Y(y)dy,
\eeqsn
for $\rho_X,\rho_Y,M(X),M(Y)$ defined as in \eqref{CO1}-\eqref{CO2}, satisfy:
\beqsn
C(X,X)=V(X),\qquad C(Y,Y)=V(Y),
\eeqsn
if $\rho(x,y)\in L^1(\R^2)$.

\section{Cohen classes}
\label{sec4}

Infinitely many operators playing the same role as in the previous sections may be constructed by means of the {\em Cohen class}
\beqsn
Q(f,g)=\frac{1}{\sqrt{2\pi}}\,\sigma* W(f,g),\qquad f,g\in\Sch(\R),
\eeqsn
for some tempered distribution $\sigma\in\Sch'(\R^2)$. For $f,g\in\Sch(\R)$ we have $W(f,g)\in\Sch(\R^2)$, and then $Q(f,g)$ is well-defined for every $\sigma\in\Sch'(\R^2)$. As for the Wigner we define
\beqsn
Q[w]=\frac{1}{\sqrt{2\pi}}\,\sigma*\Wig[w],\qquad w\in\Sch(\R^2).
\eeqsn
If $\sigma=\F^{-1}(e^{-iP(\xi,\eta)})\in\Sch'(\R^2)$ for some polynomial 
$P\in\R[\xi,\eta]$ we have the following result (see \cite[Thms. 3.1 and 3.2]{BJO-Wigner}):

\begin{Th}
\label{th3132Wigner}
Let $B(x,y,D_x,D_y)$ be a linear partial differential operator with polynomial coefficients and let
$\sigma=\F^{-1}(e^{-iP(\xi,\eta)})\in\Sch'(\R^2)$ for some $P\in\R[\xi,\eta]$.
Then for every $w\in\Sch(\R^2)$:
\beqsn
(i)\hspace*{20mm}&&\hskip-1cmQ[B(M_1,M_2,D_1,D_2)w]\\
=&&B\left(M_1-\frac12D_2-P_1,M_1+\frac12 D_2-P_1,\frac12D_1+M_2-P_2,
\frac12D_1-M_2+P_2\right)Q[w]
\eeqsn
for
\beqs
\label{P1P2}
P_1=(iD_1P)(D_1,D_2),
\quad P_2=(iD_2P)(D_1,D_2).
\eeqs
\vspace*{1mm}
\beqsn
\hspace*{-15mm}(ii)\hspace*{30mm}
&&\hskip-1cmB(M_1,M_2,D_1,D_2)Q[w]\\
=&&Q\left[B\left(\frac{M_2+M_1}{2}+P_1^*,\frac{D_1-D_2}{2}+P_2^*,D_1+D_2,M_2-M_1\right)
w\right]
\eeqsn
for
\beqsn
P_1^*=(iD_1P)(D_1+D_2,M_2-M_1), \quad
P_2^*=(iD_2P)(D_1+D_2,M_2-M_1).
\eeqsn
\end{Th}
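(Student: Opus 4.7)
The plan is to reduce both identities to the classical Wigner case already established in Proposition~\ref{prop1} (which corresponds to $\sigma=\delta$), and to track separately the effect of the extra convolution by $\sigma$. Writing $Q[w] = \frac{1}{\sqrt{2\pi}}\sigma * \Wig[w]$, differentiation $D_j$ passes freely through convolution, whereas multiplication $M_j$ picks up a commutator term via the Leibniz rule $M_j(\sigma*g) = (M_j\sigma)*g + \sigma*(M_jg)$. The content of the proof is then to compute $(M_j\sigma)*g$ explicitly and to iterate these two operations.

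The key preliminary identity is $(M_j\sigma)*g = P_j(\sigma*g)$. To derive it, note that $\hat\sigma = e^{-iP}$ and $\widehat{M_j u} = i\partial_{\xi_j}\hat u$ give $\widehat{M_j\sigma} = (\partial_{\xi_j}P)\,\hat\sigma$; Fourier-inverting and using $\F^{-1}(\xi_k\,\cdot\,) = D_k\F^{-1}$ converts the Fourier multiplier $\partial_{\xi_j}P$ into the differential operator $\partial_jP(D_1,D_2)$, which coincides with $P_j = (iD_jP)(D_1,D_2)$ since $iD_j = \partial_j$. Combined with the Leibniz rule this yields the two transfer identities
\[
\sigma*(D_jg) = D_j(\sigma*g), \qquad \sigma*(M_jg) = (M_j - P_j)(\sigma*g),
\]
which are the engine of everything that follows.

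For part (i), the recipe is to apply formula \eqref{new319W} of Proposition~\ref{prop1} to each generator inside $\Wig$, and then to push the resulting $M_i, D_i$ through $\sigma*\cdot$ using the transfer identities. For instance, $Q[M_1w] = \frac{1}{\sqrt{2\pi}}\sigma*\Wig[M_1w] = \frac{1}{\sqrt{2\pi}}\sigma*(M_1 - \tfrac12 D_2)\Wig[w] = (M_1 - \tfrac12 D_2 - P_1)Q[w]$, yielding the first substitution; the other three generators are identical. For part (ii), formula \eqref{Prop22W} is used the other way around: $D_1 Q[w] = Q[(D_1+D_2)w]$ and $D_2 Q[w] = Q[(M_2-M_1)w]$ are immediate, and for $M_j$ the Leibniz rule gives (say for $j=1$) $M_1 Q[w] = P_1 Q[w] + Q[\tfrac12(M_1+M_2)w]$. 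The first summand is then rewritten by applying the $D$-transfer inside $P_1(D_1,D_2)$, obtaining $P_1(D_1,D_2)Q[w] = Q[P_1(D_1+D_2, M_2-M_1)w] = Q[P_1^*w]$; this step is legitimate because $D_1+D_2$ and $M_2-M_1$ commute, so the polynomial substitution is unambiguous. Induction on the degree of $B$ extends the result from generators to an arbitrary polynomial-coefficient operator $B$.

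The main obstacle is ensuring this induction is consistent, i.e., that the four substituted operators in each part satisfy the canonical commutation relations of $M_1, M_2, D_1, D_2$, so that the substitution of $B$ is well defined regardless of how $B$ is written as a polynomial. Most commutators vanish immediately because $P_1, P_2$ are polynomials in the commuting pair $D_1, D_2$; the delicate case is the Heisenberg relation $[A_1, B_1] = i$ (with $A_1 = M_1 - \tfrac12 D_2 - P_1$ and $B_1 = \tfrac12 D_1 + M_2 - P_2$ in part (i)), which reduces via $[M_i, f(D_1,D_2)] = i\partial_{\xi_i}f(D_1,D_2)$ to the Schwarz symmetry $\partial_1\partial_2 P = \partial_2\partial_1 P$. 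An analogous bookkeeping, now using $[D_1+D_2, M_2-M_1] = 0$, closes part (ii).
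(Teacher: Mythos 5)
Your argument is correct, and it is worth noting that the paper itself offers no proof of Theorem~\ref{th3132Wigner}: it is quoted verbatim from \cite[Thms.\ 3.1 and 3.2]{BJO-Wigner}, so there is no internal argument to compare against. Your reduction is a clean self-contained derivation from the ingredients the paper does provide. The two transfer identities $\sigma*(D_jg)=D_j(\sigma*g)$ and $\sigma*(M_jg)=(M_j-P_j)(\sigma*g)$ are exactly right under the paper's normalization ($\widehat{M_j\sigma}=i\partial_{\xi_j}e^{-iP}=(\partial_{\xi_j}P)\hat\sigma$, and $iD_j=\partial_j$ turns $(iD_jP)(D_1,D_2)$ into the correct multiplier $(\partial_jP)(D_1,D_2)$); composing them with the two substitution rules of Proposition~\ref{prop1} on each generator reproduces precisely the four arguments in (i) and (ii), and the constant in front of the convolution is immaterial since it cancels in $(M_j\sigma)*g=P_j(\sigma*g)$. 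Your attention to well-definedness is also the right concern, since the theorem writes $B$ evaluated at noncommuting arguments: the verification that the substituted quadruple satisfies the canonical commutation relations, with the Heisenberg bracket surviving thanks to $\partial_1\partial_2P=\partial_2\partial_1P$ and the part-(ii) brackets closing because $[D_1+D_2,M_2-M_1]=0$, is exactly what makes the induction on words in the generators descend to the Weyl algebra. (An equivalent shortcut: your map $M_j\mapsto M_j-P_j$, $D_j\mapsto D_j$ is conjugation by the invertible Fourier multiplier $e^{-iP(D)}$, hence automatically an algebra homomorphism; and since $Q$ is a bijection of $\Sch(\R^2)$, two polynomial-coefficient operators agreeing on its range coincide. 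Either observation closes the gap you identified.) I see no error or missing step.
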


Let us remark that if $\sigma=\F^{-1}(e^{-iP(\xi,\eta)})$ then $|\hat\sigma|=1$ and hence,
for all $f_1,f_2,g_1,g_2\in\Sch(\R)$, from \eqref{Parceval} and \eqref{Moyal}:
\beqs
\nonumber
\langle Q(f_1,g_1),Q(f_2,g_2)\rangle=&&\frac{1}{{2\pi}}\langle\sigma *
W(f_1,g_1),\sigma*W(f_2,g_2)\rangle\\
\nonumber
=&&\frac{1}{{2\pi}}\langle\F^{-1}(\sqrt{2\pi}\,\hat\sigma\cdot\widehat{W(f_1,g_1)}),
\F^{-1}(\sqrt{2\pi}\,\hat\sigma\cdot\widehat{W(f_2,g_2)})\rangle\\
\nonumber
=&&\langle\hat\sigma\cdot\widehat{W(f_1,g_1)},\hat\sigma\cdot\widehat{W(f_2,g_2)}\rangle\\
\nonumber
=&&\langle|\hat\sigma|^2\widehat{W(f_1,g_1)},\widehat{W(f_2,g_2)}\rangle
=\langle\widehat{W(f_1,g_1)},\widehat{W(f_2,g_2)}\rangle\\
\label{QW}
=&&\langle W(f_1,g_1),W(f_2,g_2)\rangle
=\langle f_1,f_2\rangle\overline{\langle g_1,g_2\rangle},
\eeqs
since $\widehat{f*g}=\sqrt{2\pi}\hat f\cdot\hat g$.

Moreover:
\begin{Th}
\label{th9}
Let $\{f_k\}_{k\in\N_0}\subset\Sch(\R)$ be an orthonormal basis in $L^2(\R)$. Then
$\{Q(f_j,f_k)\}_{j,k\in\N_0}$ is an orthonormal basis in $L^2(\R^2)$.
\end{Th}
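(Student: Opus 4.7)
The plan is to mimic the completeness argument used for Theorem~\ref{th1}, after first noting that the passage $W(f,g)\mapsto Q(f,g)$ is essentially a unitary twist of the Wigner transform on $L^2(\R^2)$. The hypothesis $\sigma=\F^{-1}(e^{-iP(\xi,\eta)})$ with $P\in\R[\xi,\eta]$ makes $\hat\sigma$ pointwise unimodular, and this is the only structural feature of $\sigma$ we shall use.

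For orthonormality, the work is already done: applying \eqref{QW} with $(f_1,g_1)=(f_j,f_k)$ and $(f_2,g_2)=(f_i,f_h)$ immediately yields
\[
\langle Q(f_j,f_k),Q(f_i,f_h)\rangle=\langle f_j,f_i\rangle\,\overline{\langle f_k,f_h\rangle}=\delta_{j,i}\,\delta_{k,h},
\]
so $\{Q(f_j,f_k)\}_{j,k\in\N_0}$ is an orthonormal sequence in $L^2(\R^2)$.

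For completeness, following the scheme of Theorem~\ref{th1} it suffices to show that if $F\in L^2(\R^2)$ satisfies $\langle F,Q(f_j,f_k)\rangle=0$ for every $j,k\in\N_0$, then $F=0$ a.e. The key identity is obtained by combining $\widehat{\phi*\psi}=\sqrt{2\pi}\,\hat\phi\,\hat\psi$ with $Q(f,g)=\frac{1}{\sqrt{2\pi}}\sigma*W(f,g)$, giving $\widehat{Q(f,g)}=\hat\sigma\cdot\widehat{W(f,g)}$. Then by Parseval's formula
\[
0=\langle F,Q(f_j,f_k)\rangle=\langle\hat F,\hat\sigma\,\widehat{W(f_j,f_k)}\rangle=\langle\overline{\hat\sigma}\,\hat F,\widehat{W(f_j,f_k)}\rangle
\]
for every $j,k$. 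I would then set $G:=\F^{-1}(\overline{\hat\sigma}\,\hat F)$; since $|\hat\sigma|=1$, multiplication by $\overline{\hat\sigma}$ is unitary on $L^2(\R^2)$, so $G\in L^2(\R^2)$ with $\|G\|_{L^2}=\|F\|_{L^2}$. Parseval once more gives $\langle G,W(f_j,f_k)\rangle=0$ for all $j,k$, and Theorem~\ref{th1} (applied to the orthonormal basis $\{f_k\}$ of $L^2(\R)$) forces $G=0$; hence $\overline{\hat\sigma}\,\hat F=0$, and since $|\hat\sigma|=1$ this yields $\hat F=0$, i.e.\ $F=0$.

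I do not anticipate a genuine obstacle beyond the bookkeeping just sketched. The only nontrivial input is the unitarity of $M_{\overline{\hat\sigma}}$ on $L^2(\R^2)$, which is automatic from $P\in\R[\xi,\eta]$ since then $\hat\sigma=e^{-iP}$ is pointwise of modulus one. No delicate domain analysis for unbounded operators (of the kind carried out for $\hat L$ in Section~\ref{sec2}) is required here, because the convolution with $\sigma$ lives entirely inside $L^2(\R^2)$.
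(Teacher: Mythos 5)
Your proposal is correct and follows essentially the same route as the paper: orthonormality from \eqref{QW}, and completeness by passing to the Fourier side and reducing to Theorem~\ref{th1} via an auxiliary $G$ (your $G=\F^{-1}(\overline{\hat\sigma}\,\hat F)$ coincides with the paper's $G=\F^{-1}(\hat F/\hat\sigma)$, since $|\hat\sigma|=1$ makes $\overline{\hat\sigma}=1/\hat\sigma$). The only cosmetic difference is that the paper explicitly notes $Q(f_j,f_k)\in\Sch(\R^2)\subset L^2(\R^2)$ before invoking the completeness criterion of \cite[Thm.~3.4.2]{C}.
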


\begin{proof}
Let us first remark that $Q(f_j,f_k)\in\Sch(\R^2)\subset L^2(\R^2)$.
Moreover $\{Q(f_j,f_k)\}_{j,k\in\N_0}$ is an orthonormal sequence by \eqref{QW}.

We only have to prove that if $F\in L^2(\R^2)$ satisfies
\beqsn
\langle F,Q(f_j,f_k)\rangle=0,\qquad\forall j,k\in\N_0,
\eeqsn
then $F=0$ a.e. in $\R^2$ (see \cite[Thm. 3.4.2]{C}).
Let $G=\F^{-1}(\hat F/\hat\sigma)\in L^2(\R^2)$, so that from \eqref{Parceval}
\beqsn
0=&&\langle F,Q(f_j,f_k)\rangle=
\langle \hat F,\widehat{Q(f_j,f_k)}\rangle
=\langle \hat G\cdot\hat\sigma,\hat\sigma\cdot\widehat{W(f_j,f_k)}\rangle\\
=&&\langle |\hat\sigma|^2\hat G,\widehat{W(f_j,f_k)}\rangle
=\langle\hat G,\widehat{W(f_j,f_k)}\rangle
=\langle G,{W(f_j,f_k)}\rangle,\qquad\forall j,k\in\N_0,
\eeqsn
which implies $G=0$ a.e. in $\R^2$ since 
$\{W(f_j,f_k)\}_{j,k\in\N_0}$ is a basis in $L^2(\R^2)$ by Theorem~\ref{th1}.

Then $\hat F=\hat G\cdot\hat\sigma=0$, i.e. $F=0$ a.e. in $\R^2$.
\end{proof}

Let us remark that if $f,g\in\Sch(\R)\subset L^2(\R)$ with $\|f\|=\|g\|=1$ then,
by Lemma~\ref{lemma2}, \eqref{QW} and Theorem~\ref{th3132Wigner}, we have 
\beqs
\nonumber
&&\hskip-1cm\mu^2(g)+\mu^2(\hat g)+\Delta^2(g)+\Delta^2(\hat g)
=\langle\hat L W(f,g),W(f,g)\rangle\\
\nonumber
=&&\langle W(f,(M^2+D^2)g),W(f,g)\rangle
=\langle Q(f,(M^2+D^2)g),Q(f,g)\rangle\\
\label{Q1}
=&&\langle[(M_1+\frac12D_2-P_1)^2+(\frac12D_1-M_2+P_2)^2]Q(f,g),Q(f,g)\rangle
\eeqs
for $P_1,P_2$ as in \eqref{P1P2}.

Then Theorem \ref{th2} can be rephrased as follows, for any choice of $P\in\R[\xi,\eta]$:

\begin{Th}
\label{th6}
Let $\{f_k\}_{k\in\N_0},\{g_k\}_{k\in\N_0}\subset\Sch(\R)$ be two orthonormal sequences in 
$L^2(\R)$. Then
\beqs
\label{T61}
\sum_{k=0}^n\langle\tilde LQ(f_j,g_k),Q(f_j,g_k)\rangle\geq(n+1)^2,
\qquad\forall n\in\N_0,
\eeqs
for any linear partial differential operator $\tilde L$ of the form
\beqsn
\tilde L(M_1,M_2,D_1,D_2)=\left(M_1+\frac12D_2-P_1\right)^2+\left(\frac12D_1-M_2+P_2\right)^2
\eeqsn
with
\beqsn
&&P_1=(iD_1P)(D_1,D_2),\qquad P_2=(iD_2P)(D_1,D_2),\\
&&P\in\R[\xi,\eta],\qquad \sigma=\F^{-1}(e^{-iP(\xi,\eta)}),\\
&&Q(f_j,f_k)=\frac{1}{\sqrt{2\pi}}\,\sigma* W(f_j,f_k).
\eeqsn
\end{Th}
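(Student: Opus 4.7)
The plan is to reduce the inequality for $\tilde L$ and $Q$ to the corresponding inequality for $\hat L$ and $W$ already established in Theorem~\ref{th2}. Since $\sigma=\F^{-1}(e^{-iP(\xi,\eta)})$ satisfies $|\hat\sigma|=1$, the Moyal-type identity \eqref{QW} states that $Q$ preserves the same inner product structure on tensor products as $W$. The key observation is then that the operator $\tilde L$ has been tailor-made (via Theorem~\ref{th3132Wigner}) so that $\tilde L\circ Q=Q\circ (M_2^2+D_2^2)$ when the right factor is conjugated into the tensor.

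More concretely, I would first apply Theorem~\ref{th3132Wigner}(i) with $B(x,y,D_x,D_y)=y^2+D_y^2$ to the Schwartz function $w=f_j\otimes\bar g_k$. Since $M$ is multiplication by a real variable one has $\overline{Mg}=M\bar g$, and since $\overline{Dg}=-D\bar g$ one has $\overline{D^2g}=D^2\bar g$; hence $(M_2^2+D_2^2)(f_j\otimes\bar g_k)=f_j\otimes\overline{(M^2+D^2)g_k}$, which gives
\begin{equation*}
\tilde L\,Q(f_j,g_k)=Q\bigl(f_j,(M^2+D^2)g_k\bigr).
\end{equation*}
Next, invoking \eqref{QW} (applicable since $f_j,g_k\in\Sch(\R)$ and $(M^2+D^2)g_k\in\Sch(\R)$) and then \eqref{Moyal} in the reverse direction yields
\begin{equation*}
\langle\tilde L\,Q(f_j,g_k),Q(f_j,g_k)\rangle=\langle f_j,f_j\rangle\overline{\langle(M^2+D^2)g_k,g_k\rangle}=\langle W(f_j,(M^2+D^2)g_k),W(f_j,g_k)\rangle.
\end{equation*}
By Lemma~\ref{lemma2}(i), the right-hand side equals $\langle\hat L\,W(f_j,g_k),W(f_j,g_k)\rangle$.

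Summing on $k$ from $0$ to $n$ and applying Theorem~\ref{th2} (whose hypotheses are satisfied since $\|f_j\|=1$ and $\{g_k\}$ is orthonormal) immediately gives the desired estimate $(n+1)^2$. No step presents a real obstacle: the whole content of the theorem is the identification $\tilde L\,Q=Q\circ(M^2+D^2)$ on the second argument, and this is precisely the conjugation formula proved in Theorem~\ref{th3132Wigner}. The only point requiring a little care is checking the sign conventions when moving the operators $M$ and $D$ through the complex conjugation in $w=f_j\otimes\bar g_k$, but this is a short computation.
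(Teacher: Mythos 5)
Your proposal is correct and follows essentially the same route as the paper: the paper establishes the chain of identities $\langle\hat L W(f,g),W(f,g)\rangle=\langle W(f,(M^2+D^2)g),W(f,g)\rangle=\langle Q(f,(M^2+D^2)g),Q(f,g)\rangle=\langle\tilde L Q(f,g),Q(f,g)\rangle$ in the remark preceding the theorem (using Lemma~\ref{lemma2}, \eqref{QW} and Theorem~\ref{th3132Wigner}) and then invokes Theorem~\ref{th2}, which is exactly your argument read in the opposite direction. Your explicit check of the conjugation signs in $(M_2^2+D_2^2)(f_j\otimes\bar g_k)=f_j\otimes\overline{(M^2+D^2)g_k}$ is accurate and fills in a detail the paper leaves implicit.
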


\begin{Ex}
\label{ex7}
\begin{em}
Let $P(D_1,D_2)=\frac12 D_1D_2$. Then
\beqsn
&&P_1=iD_1P(\xi_1,\xi_2)\big|_{(\xi_1,\xi_2)=(D_1,D_2)}=\frac12D_2\\
&&P_2=iD_2P(\xi_1,\xi_2)\big|_{(\xi_1,\xi_2)=(D_1,D_2)}=\frac12D_1
\eeqsn
and hence
\beqsn
\tilde L=&&\left(M_1+\frac12D_2-\frac12D_2\right)^2
+\left(\frac12D_1-M_2+\frac12D_1\right)^2\\
=&&M_1^2+(D_1-M_2)^2.
\eeqsn
Therefore, by Theorem~\ref{th6}, we obtain
\beqsn
\sum_{k=1}^n\langle(M_1^2+(D_1-M_2)^2)Q(f_j,f_k),Q(f_jf_k)\rangle
\geq (n+1)^2,\qquad
\forall n\in\N_0.
\eeqsn
\end{em}
\end{Ex}

\begin{Ex}
\begin{em}
Similar results can be obtained considering the operator
$P(M_1,M_2)=M_1^2+M_2^2$ in \eqref{P54} instead of $\hat L$ and then
Theorem~\ref{prop5} instead of Corollary~\ref{cor4}.
Indeed, for $f\in \Sch(\R)$ with $\|f\|=1$ we can write, by
Proposition~\ref{prop1}, \eqref{QW} and Theorem~\ref{th3132Wigner}:
\beqsn
&&\hskip-1cm\langle(M_1^2+M_2^2)W(f),W(f)\rangle\\
=&&\langle\Wig[(\frac14(M_1+M_2)^2+\frac14(D_1-D_2)^2)f\otimes\bar f],W(f)\rangle\\
=&&\frac14\langle Q[((M_1+M_2)^2+(D_1-D_2)^2)f\otimes\bar f],Q(f)\rangle\\
=&&\frac14\langle(M_1-\frac12D_2-P_1+M_1+\frac12D_2-P_1)^2Q(f),Q(f)\rangle\\
&&+\frac14\langle
(\frac12D_1+M_2-P_2-\frac12D_1+M_2-P_2)^2Q(f),Q(f)\rangle\\
=&&\langle((M_1-P_1)^2+(M_2-P_2)^2)Q(f),Q(f)\rangle
\eeqsn
for any $P_1,P_2$ as in \eqref{P1P2}.

It follows that if $\{f_k\}_{k\in\N_0}\subset\Sch(\R)$ is an orthonormal sequence in $L^2(\R)$ then, from
Theorem~\ref{prop5},
\beqs
\label{L*}
\sum_{k=0}^n\langle L^*Q(f_k),Q(f_k)\rangle
\geq\frac{(n+1)^2}{2},
\qquad\forall n\in\N_0,
\eeqs
for any linear partial differential operator $L^*$ of the form
\beqsn
L^*(M_1,M_2,D_1,D_2)=(M_1-P_1)^2+(M_2-P_2)^2
\eeqsn
with
\beqsn
&&P_1=(iD_1P)(D_1,D_2),\qquad P_2=(iD_2P)(D_1,D_2),\\
&&P\in\R[\xi,\eta],\qquad \sigma=\F^{-1}(e^{-iP(\xi,\eta)}),\\
&&Q(f_k)=\frac{1}{\sqrt{2\pi}}\,\sigma* W(f_k).
\eeqsn
\end{em}
\end{Ex}

\begin{Rem}
\label{rem66}
\begin{em}
Any linear operator $T:\ L^2(\R^2)\to L^2(\R^2)$ (not necessarily everywhere defined) satisfying, for some orthonormal sequence
$\{f_k\}_{k\in\N_0}\subset L^2(\R)$,
\beqs
\label{pdo-notbound}
\sum_{k=0}^n\langle TW(f_j,f_k),W(f_j,f_k)\rangle\geq(n+1)^2,\qquad
\forall n\in\N_0,
\eeqs
cannot be a bounded operator on $L^2(\R^2)$.
Indeed, assuming by contradiction that $T$ is bounded, by Theorem~\ref{th1} we would have, for all $n\in\N_0$:
\beqsn
(n+1)^2\leq&&\sum_{k=0}^n\langle TW(f_j,f_k),W(f_j,f_k)\rangle\\
\leq&&\sum_{k=0}^n\|T\|_{\Lin(L^2,L^2)}\|W(f_j,f_k)\|_{L^2}^2
=(n+1)\|T\|_{\Lin(L^2,L^2)}
\eeqsn
which gives a contradiction for large $n$. 
The above considerations can be applied to the partial differential operators with polynomial coefficients appearing in the various results were we have proved estimates of the kind of \eqref{pdo-notbound}. This is not surprising since all
non-constant differential operators with polynomial coefficients are in fact unbounded in $L^2(\R^n)$.
Indeed, assume first that $P(x,D)$ has non-constant coefficients, i.e.
\beqsn
P(x,D)=\sum_{|\beta|\leq\ell}P_\beta(x)D^\beta,\qquad x\in\R^n,
\eeqsn
with $P_\beta(x)$ polynomials of degree less than or equal to $m\geq1$.
We choose $\beta_0\in\N_0^n$, $|\beta_0|\leq\ell$ and $a\in\R^n\setminus\{0\}$
such that $P_{\beta_0}(ta)$ is a polynomial in $t$ of maximum degree $m$. 

Taking then $\varphi\in\D(\R^n)$ with
\beqsn
D^\beta\varphi(0)=
\begin{cases}
0, & |\beta|\leq\ell,\beta\neq\beta_0\cr
1,&\beta=\beta_0,
\end{cases}
\eeqsn
we have 
$\|\varphi(x-ta)\|_{L^2}=\|\varphi(x)\|_{L^2},$ but 
\beqsn
\|P(x,D)\varphi(x-ta)\|_{L^2}=\|P(x+ta,D)\varphi(x)\|_{L^2}\to+\infty\quad\mbox{as}\ t\to+\infty.
\eeqsn

If $P(x,D)=P(D)$ has constant coefficients we argue similarly, choosing $a\in\R^n\setminus\{0\}$ 
in such a way that $P(ta)$ is a polynomial in $t$ of maximum degree $m\geq1$ and taking then
 $\varphi\in\D(\R^n)$ with $\hat\varphi(0)\neq0$ we have 
$\|e^{itx\cdot a}\varphi(x)\|_{L^2}=\|\varphi(x)\|_{L^2},$
but
\beqsn
\|P(D)e^{itx\cdot a}\varphi(x)\|_{L^2}=\|P(\xi)\hat{\varphi}(\xi-ta)\|_{L^2}
=\|P(\xi+ta)\hat{\varphi}(\xi)\|_{L^2}\to+\infty \quad\mbox{as}\ t\to+\infty.
\eeqsn
\end{em}
\end{Rem}

\section{Riesz bases}
\label{sec7}

In this section we consider a general Riesz basis of $L^2(\R)$ instead of an orthonormal basis.
We recall that a {\em Riesz basis} in a Hilbert space $H$ is the image of an orthonormal basis
for $H$ under an invertible linear bounded operator.
In particular, if $\{u_k\}_{k\in\N_0}$ is a Riesz basis for $L^2(\R)$, we can find an invertible linear bounded operator $U_1:\,L^2(\R)\to L^2(\R)$ such that
\beqsn
U_1(u_k)=h_k,\qquad\forall k\in\N_0,
\eeqsn
for the Hermite functions $\{h_k\}_{k\in\N_0}$. Moreover (see \cite[Lemma 3.6.2]{C})
\beqs
\label{c>0}
0<C_1:=\inf_{k\in\N_0}\|u_k\|^2\leq \sup_{k\in\N_0}\|u_k\|^2=:C_2<+\infty.
\eeqs
We can thus generalize Theorem \ref{th2} to Riesz bases:

\begin{Th}
\label{th10}
If $\{u_k\}_{k\in\N_0}$ and $\{v_k\}_{k\in\N_0}$ are Riesz bases for $L^2(\R)$ and $\hat L$ is the operator in \eqref{Lhat}, then for all $i,n\in\N_0$
\beqs
\label{Rb-est}
\sum_{k=0}^n\langle\hat LW(u_i,v_k),W(u_i,v_k)\rangle\geq
\frac{\|U_2^{-1}\|_{\mathcal L(L^2,L^2)}^2}{\|U_1\|_{\mathcal L(L^2,L^2)}^2}
\left[\frac{n+1}{\|U_2^{-1}\|_{\mathcal L(L^2,L^2)}^2 \|U_2\|_{\mathcal L(L^2,L^2)}^2}
\right]^2,
\eeqs
where $U_j:\,L^2(\R)\to L^2(\R)$, $j=1,2$, are such that $U_1(u_k)=h_k$ and $U_2(v_k)=h_k$, for
the Hermite functions $h_k$ defined in \eqref{hermite}, and $[x]$ denotes the integer part of $x$.
\end{Th}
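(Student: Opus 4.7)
My plan is to run the argument of Theorem~\ref{th2} with $f_i=u_i$, $g_k=v_k$, carefully tracking how the Riesz-basis bounds replace the orthonormality used there. Expanding $W(u_i,v_k)\in L^2(\R^2)$ in the orthonormal basis $\{W(h_j,h_\ell)\}_{j,\ell\in\N_0}$ of Theorem~\ref{th1}, Moyal's formula \eqref{Moyal} gives coefficients $\langle u_i,h_j\rangle\overline{\langle v_k,h_\ell\rangle}$; combining with the eigenvalue identity \eqref{autovettLhat} and the series expression \eqref{fuoriDL}, I obtain the factorization
\begin{equation*}
\sum_{k=0}^n\langle\hat L W(u_i,v_k),W(u_i,v_k)\rangle=\|u_i\|^2\sum_{\ell=0}^{+\infty}\alpha_\ell(2\ell+1),\qquad \alpha_\ell:=\sum_{k=0}^n|\langle v_k,h_\ell\rangle|^2,
\end{equation*}
where, as in Theorem~\ref{th2}, the case of divergent right-hand side makes \eqref{Rb-est} trivial.

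The next step is to extract three numerical estimates from the operator norms of $U_1$ and $U_2$. Writing $u_i=U_1^{-1}h_i$ and $v_k=U_2^{-1}h_k$, the identity $1=\|h_k\|\leq\|U_2\|\,\|v_k\|$ yields $\|v_k\|^2\geq\|U_2\|^{-2}$, and analogously $\|u_i\|^2\geq\|U_1\|^{-2}$. Consequently $\sum_{\ell=0}^{+\infty}\alpha_\ell=\sum_{k=0}^n\|v_k\|^2\geq(n+1)/\|U_2\|^2$. For the individual $\alpha_\ell$, the identity $\langle v_k,h_\ell\rangle=\langle h_k,(U_2^{-1})^*h_\ell\rangle$ together with Parseval's equality for the orthonormal basis $\{h_k\}$ provides the essential new ingredient
\begin{equation*}
\alpha_\ell\leq\sum_{k=0}^{+\infty}|\langle h_k,(U_2^{-1})^*h_\ell\rangle|^2=\|(U_2^{-1})^*h_\ell\|^2\leq\|U_2^{-1}\|^2,
\end{equation*}
which plays the role of the bound $\alpha_\ell\leq 1$ used in the proof of Theorem~\ref{th2}.

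With $C:=\|U_2^{-1}\|^2$ and $S:=(n+1)/\|U_2\|^2$, I then minimize $\sum_\ell\alpha_\ell(2\ell+1)$ over all sequences satisfying $0\leq\alpha_\ell\leq C$ and $\sum_\ell\alpha_\ell\geq S$. Since the weights $2\ell+1$ are strictly increasing, the infimum is attained by saturating the first $p:=[S/C]=[(n+1)/(\|U_2\|^2\|U_2^{-1}\|^2)]$ indices, giving
\begin{equation*}
\sum_{\ell=0}^{+\infty}\alpha_\ell(2\ell+1)\geq C\sum_{\ell=0}^{p-1}(2\ell+1)=Cp^2=\|U_2^{-1}\|^2\left[\frac{n+1}{\|U_2\|^2\,\|U_2^{-1}\|^2}\right]^2.
\end{equation*}
Multiplying by $\|u_i\|^2\geq\|U_1\|^{-2}$ produces exactly \eqref{Rb-est}. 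The only genuinely new point compared to Theorem~\ref{th2} is the bound $\alpha_\ell\leq\|U_2^{-1}\|^2$ above; once it is in place the rearrangement step is standard (one can replicate the explicit weight construction \eqref{ck}--\eqref{disug43} with $1$ replaced by $C$), and the integer part $[\,\cdot\,]$ appears necessarily because the weights $2\ell+1$ are quantized while $S/C$ is generally not.
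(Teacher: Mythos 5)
Your proposal is correct and follows essentially the same route as the paper's proof: the same expansion in the basis $\{W(h_j,h_\ell)\}$, the same quantities $\alpha_\ell$ with the two bounds $\alpha_\ell\le\|U_2^{-1}\|^2$ and $\sum_\ell\alpha_\ell\ge(n+1)/\|U_2\|^2$, and the same rearrangement step (the paper carries it out via the explicit weights $c_k$ from \eqref{ck}--\eqref{disug43} with $1$ replaced by $B$, exactly as you indicate). The only cosmetic difference is that you derive the Bessel-type bound $\alpha_\ell\le\|U_2^{-1}\|^2$ directly through the adjoint $(U_2^{-1})^*$ and Parseval, where the paper cites \cite[Prop.~3.6.4]{C}.
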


\begin{proof}
As in \eqref{T22}-\eqref{T23} we obtain that
\beqsn
\sum_{k=0}^n \langle\hat{L}W(u_i,v_k),W(u_i,v_k)\rangle =&& \sum_{j=0}^{+\infty} |\langle u_i,h_j\rangle|^2 \sum_{\ell=0}^{+\infty} \sum_{k=0}^n |\langle v_k,h_\ell\rangle|^2 (2\ell +1) \\
=&& \| u_i\|^2 \sum_{\ell=0}^{+\infty} \sum_{k=0}^n |\langle v_k,h_\ell\rangle|^2 (2\ell +1),
\eeqsn
and we can suppose that the series in the right-hand side is convergent, otherwise \eqref{Rb-est} would be trivial.
We thus obtain, for the constant $C_1$ defined in \eqref{c>0}:
\beqs
\label{71}
\sum_{k=0}^n\langle\hat LW(u_i,v_k),W(u_i,v_k)\rangle
\geq C_1\sum_{\ell=0}^{+\infty}
\sum_{k=0}^n|\langle v_k,h_\ell\rangle|^2(2\ell+1)=C_1\sum_{\ell=0}^{+\infty}\alpha_\ell
(2\ell+1)
\eeqs
for
\beqsn
\alpha_\ell:=\sum_{k=0}^n|\langle v_k,h_\ell\rangle|^2\leq
\sum_{k=0}^{+\infty}|\langle v_k,h_\ell\rangle|^2\leq B \|h_\ell\|^2=B
\eeqsn
for $B=\|U_2^{-1}\|^2_{\mathcal L(L^2,L^2)}$ because of \cite[Prop. 3.6.4]{C}.

We have
\beqsn
\sum_{\ell=0}^{+\infty}\alpha_\ell=&&\sum_{\ell=0}^{+\infty}\sum_{k=0}^n|\langle v_k,h_\ell\rangle|^2
=\sum_{k=0}^n\sum_{\ell=0}^{+\infty}|\langle v_k,h_\ell\rangle|^2
=\sum_{k=0}^n\|v_k\|^2\geq\tilde C_1(n+1),
\eeqsn
for $\tilde{C}_1:=\inf_{k\in\N_0} \| v_k\|^2$.
Note that $0<\tilde C_1\leq\sup_{k\in\N_0}\|U_2^{-1}\|^2_{\mathcal L(L^2,L^2)}\|h_k\|^2=B$.

Let us now assume $n\geq\frac{B}{\tilde C_1}-1$ so that 
\beqsn
m:=\left[\frac{n+1}{B}\tilde C_1\right]-1\in\N_0
\eeqsn
and write
\beqsn
\tilde C_1(n+1)\leq\sum_{\ell=0}^{+\infty}\alpha_\ell=\alpha_0+\ldots+\alpha_m+R_m
\eeqsn
with
\beqsn
R_m:=\sum_{\ell\geq m+1}\alpha_\ell.
\eeqsn

If $R_m=0$ then $\frac{n+1}{B}\tilde C_1\in\N$ and $\alpha_0=\ldots=\alpha_m=B$
because otherwise or $m+1<\frac{n+1}{B}\tilde C_1$ or $\alpha_k<B$ for
some $k=0,\ldots,m$ and 
\beqsn
\tilde C_1(n+1)\leq\alpha_0+\ldots+\alpha_m<B\cdot \frac{n+1}{B}\tilde C_1=(n+1)\tilde C_1
\eeqsn
would give a contradiction.
It follows that setting
\beqsn
c_k:=
\begin{cases}
0 & \mbox{if}\ R_m=0\cr
\frac{B-\alpha_k}{R_m}& \mbox{if}\ R_m>0,
\end{cases}
\eeqsn
we have $c_k\geq0$ and $\alpha_k+c_kR_m=B$ for all $0\leq k\leq m$.

Moreover, if $R_m>0$
\beqsn
(c_0+\ldots+c_m)R_m=&&B\left[\frac{n+1}{B}\tilde C_1\right]-(\alpha_0+\ldots+\alpha_m)\\
\leq&& \tilde C_1(n+1)-(\alpha_0+\ldots+\alpha_m)\leq R_m.
\eeqsn

It follows that $c_0+\ldots+c_m\leq1$ and hence, for all $n\geq\frac{B}{\tilde C_1}-1$,
\beqsn
\sum_{\ell=0}^{+\infty}\alpha_\ell(2\ell+1)=&&\sum_{\ell=0}^{m}\alpha_\ell (2\ell+1)
+\sum_{\ell\geq m+1}\alpha_\ell (2\ell+1)\\
\geq&&\sum_{\ell=0}^{m}\alpha_\ell (2\ell+1)
+(c_0+\ldots+c_m) \sum_{\ell\geq m+1}\alpha_\ell (2\ell+1)\\
=&&\sum_{\ell=0}^{m}\alpha_\ell (2\ell+1)+ c_{0}
\sum_{\ell\geq m+1}
\alpha_\ell
\underbrace{(2\ell+1)}_{\geq1}+\ldots
+ c_{m}\sum_{\ell\geq m+1}\alpha_\ell
\underbrace{(2\ell+1)}_{\geq 2m+1}\\
\geq&&\underbrace{(\alpha_0
+ c_{0}R_m)}_{=B}\cdot1+\ldots
+\underbrace{(\alpha_{m}
+ c_{m}R_m)}_{=B}\cdot (2m+1)\\
=&&B\sum_{k=0}^{m}(2k+1)=B(m+1)^2=B\left[\frac{n+1}{B}\tilde C_1\right]^2.
\eeqsn
Note that the above inequality is trivial if $\frac{n+1}{B}\tilde C_1<1$ so that
 from \eqref{71} we have, for all $n\in \N_0$,
\beqs
\label{CC1}
\sum_{k=0}^n\langle\hat LW(u_i,v_k),W(u_i,v_k)\rangle
\geq 
C_1B\left[\frac{n+1}{B}\tilde C_1\right]^2.
\eeqs

Let us now remark that, from the the continuity of $U_j:\,L^2(\R)\to L^2(\R)$, $j=1,2$,
\beqsn
1=\|h_k\|_{L^2}\leq\|U_1\|_{\mathcal L(L^2,L^2)}\cdot\|u_k\|_{L^2}, \quad 1=\|h_k\|_{L^2}\leq\|U_2\|_{\mathcal L(L^2,L^2)}\cdot\|v_k\|_{L^2}
\eeqsn
for every $k\in\N_0$, and therefore
\beqsn
C_1=\inf_{k\in\N_0}\|u_k\|_{L^2}^2\geq
\frac{1}{\|U_1\|_{\mathcal L(L^2,L^2)}^2},\quad
\tilde{C}_1=\inf_{k\in\N_0}\|v_k\|_{L^2}^2\geq
\frac{1}{\|U_2\|_{\mathcal L(L^2,L^2)}^2}.
\eeqsn
Since $B=\|U_2^{-1}\|_{\mathcal L(L^2,L^2)}^2$ we finally have from \eqref{CC1} that
\beqsn
\sum_{k=0}^n\langle\hat LW(u_i,v_k),W(u_i,v_k)\rangle
\geq\frac{\|U_2^{-1}\|_{\mathcal L(L^2,L^2)}^2}{\|U_1\|_{\mathcal L(L^2,L^2)}^2}
\left[\frac{n+1}{\|U_2^{-1}\|_{\mathcal L(L^2,L^2)}^2\|U_2\|_{\mathcal L(L^2,L^2)}^2}\right]^2
\eeqsn
for all $n\in\N_0$.
\end{proof}

From Theorem~\ref{th10} and Lemma~\ref{lemma2} we have the mean-dispersion
principle for Riesz bases:
\begin{Cor}
\label{cor11}
Let $\{u_k\}_{k\in\N_0}$ be a Riesz basis in $L^2(\R)$, with
$U(u_k)=h_k$, for the Hermite functions $\{h_k\}_{k\in\N_0}$ defined in 
\eqref{hermite}.
Then for all $n\in\N_0$
\beqsn
\sum_{k=0}^n(\Delta^2(u_k)+\Delta^2(\hat u_k)+\mu^2(u_k)+\mu^2(\hat u_k))
\geq\frac{1}{\|U^{-1}\|^2\|U\|^2}
\left[\frac{n+1}{\|U^{-1}\|^2\|U\|^2}\right]^2,
\eeqsn
where $\|\cdot\|=\|\cdot\|_{\mathcal L(L^2,L^2)}$.
\end{Cor}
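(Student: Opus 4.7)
The plan is to specialize Theorem \ref{th10} to the single-Riesz-basis case $v_k=u_k$ (so $U_1=U_2=U$), convert the Wigner inner product appearing on the left-hand side into the desired mean--variance quantity via Lemma \ref{lemma2}(ii), and finally dispense with the resulting $\|u_i\|^2\|u_k\|^2$ weights using the elementary norm bound $\|u_k\|\leq\|U^{-1}\|$ for elements of a Riesz basis.

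Setting $A_k:=\Delta^2(u_k)+\Delta^2(\hat u_k)+\mu^2(u_k)+\mu^2(\hat u_k)$, I may assume every $A_k$ is finite, since otherwise both sides of the desired inequality are $+\infty$ and there is nothing to prove. Under this assumption Lemma \ref{lemma2}(ii) applied with $(f,g)=(u_i,u_k)$ gives
\[
\langle\hat L\,W(u_i,u_k),W(u_i,u_k)\rangle=\|u_i\|^2\|u_k\|^2 A_k.
\]
Substituting this identity term by term into the inequality of Theorem \ref{th10} with $U_1=U_2=U$, I obtain, for every fixed $i\in\N_0$,
\[
\|u_i\|^2\sum_{k=0}^n\|u_k\|^2 A_k\geq \frac{\|U^{-1}\|^2}{\|U\|^2}\left[\frac{n+1}{\|U^{-1}\|^2\|U\|^2}\right]^2.
\]

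To conclude, observe that $u_k=U^{-1}h_k$ and $\|h_k\|=1$, whence $\|u_k\|\le \|U^{-1}\|$, so $\|u_i\|^2\|u_k\|^2\le \|U^{-1}\|^4$ for all $i,k$. Consequently the left-hand side above is bounded above by $\|U^{-1}\|^4\sum_{k=0}^n A_k$, and dividing through by $\|U^{-1}\|^4$ produces exactly the estimate claimed in the corollary.

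The argument is essentially bookkeeping once Theorem \ref{th10} and Lemma \ref{lemma2}(ii) are in hand; the only point requiring any attention is tracking the operator-norm factors between the two results, which reduces to the single Riesz-basis observation $\|u_k\|\le\|U^{-1}\|$.
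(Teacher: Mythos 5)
Your proof is correct and follows essentially the same route as the paper: Theorem~\ref{th10} combined with Lemma~\ref{lemma2}$(ii)$ and the bound $\|u_k\|\le\|U^{-1}\|\,\|h_k\|=\|U^{-1}\|$. The only (harmless, arguably cleaner) difference is that you keep the first Wigner argument fixed at $u_i$, so that Theorem~\ref{th10} is invoked literally as stated, whereas the paper works with the diagonal terms $W(u_k)=W(u_k,u_k)$; your side remark that otherwise ``both sides'' are $+\infty$ should read that only the left-hand side is $+\infty$, which still makes the inequality trivial.
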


\begin{proof}
From Lemma~\ref{lemma2} we have that
\beqsn
\langle\hat LW(u_k),W(u_k)\rangle=\|u_k\|^4(\Delta^2(u_k)+\Delta^2(\hat u_k)+\mu^2(u_k)+\mu^2(\hat u_k)).
\eeqsn
Since $\| u_k\|\leq \|U^{-1}\|\cdot\| h_k\|=\| U^{-1}\|$, by Theorem \ref{th10} we obtain:
\beqsn
&&\hskip-1cm\sum_{k=0}^n(\Delta^2(u_k)+\Delta^2(\hat u_k)+\mu^2(u_k)+\mu^2(\hat u_k))\\
\geq&&\frac{1}{\| U^{-1}\|^4}\sum_{k=0}^n\|u_k\|^4(\Delta^2(u_k)+\Delta^2(\hat u_k)+\mu^2(u_k)+\mu^2(\hat u_k))\\
\geq&&\frac{1}{\|U^{-1}\|^2\|U\|^2}
\left[\frac{n+1}{\|U^{-1}\|^2\|U\|^2}\right]^2.
\eeqsn

\end{proof}

Note that if the Riesz basis $\{u_k\}_{k\in\N_0}$ is orthonormal then $\|U\|=1$ since
\beqsn
U(f)=\sum_{k=0}^{+\infty}\langle f,u_k\rangle U(u_k)
=\sum_{k=0}^{+\infty}\langle f,u_k\rangle h_k,\qquad f\in L^2(\R),
\eeqsn
and hence $\|U(f)\|_{L^2}=\|f\|_{L^2}$ for all $f\in L^2(\R)$.

From Corollary~\ref{cor11} in the case of orthonormal Riesz bases we thus find again \eqref{m-v-intro}, i.e. Shapiro's Mean Dispersion principle.
This improves \cite[Cor. 2.8]{JP} for $\|U\|=1$, where a weaker estimate is obtained with respect to Shapiro's Mean Dispersion principle (\cite[Thm. 2.3]{JP}).

\vspace*{10mm}
{\bf Acknowledgments.}
The authors are sincerely grateful to Prof. Paolo Boggiatto for helpful discussions about the subject of this work. 
Boiti and Oliaro were partially supported by  the INdAM-GNAMPA Project 2020 ``Analisi microlocale e applicazioni: PDEs stocastiche e di evoluzione, analisi tempo-frequenza, variet\`a", and by the Project FAR 2019 (University of Ferrara). Boiti was also supported by the Projects FAR 2020, FAR 2021, FIRD 2022 (University of Ferrara) and Oliaro was also supported by the Projects ``Ricerca locale 2020 - linea A'' and ``Ricerca locale 2021 - linea A'' (University of Torino). Jornet was partially supported by the projects
	PID2020-119457GB-100 funded by MCIN/AEI/10.13039/501100011033 and
	by ``ERDF A way of making Europe'', and by the project GV AICO/2021/170.

\end{document}